\documentclass{elsarticle}
\usepackage{geometry,amsmath,mathrsfs,amsthm,amssymb,cases,mathrsfs,mathtools,txfonts}
\usepackage{hyperref,soul,color,xcolor}

\newtheorem{theorem}{Theorem}[section]
\newtheorem{lemma}[theorem]{Lemma}
\newtheorem{proposition}[theorem]{Proposition}
\newtheorem{corollary}[theorem]{Corollary}
\theoremstyle{definition}

\theoremstyle{remark}
\newtheorem{remark}[theorem]{Remark}
\numberwithin{equation}{section}

\begin{document}
\title{Entire Solutions and Asymptotic Behavior to a Class of Parabolic $k$-Hessian Equations with More General Right-Hand Side Terms}
\author[BNU]{Ning An}
\ead{AnNingAN@mail.bnu.edu.cn}

\author[BNU]{Jiguang Bao\corref{mycorrespondingauthor}}
\cortext[mycorrespondingauthor]{Corresponding author}
\ead{jgbao@bnu.edu.cn}

\address[BNU]{School of Mathematical Sciences, Beijing Normal University,\\ Laboratory of Mathematics and Complex Systems, Ministry of Education,
Beijing 100875, China}

\begin{abstract}
This paper investigates entire classical separable variable radial solutions to a class of parabolic $k$-Hessian equations $-u_t \left(\sigma_k(\lambda(D^2u))\right)^{\alpha}=f(|x|)g(t)$, where the right-hand side consists of positive continuous functions. Without assuming that $-u_t$ is positively bounded, we extend previous work for the equations which right-hand side is $1$ to more general cases, including cases where the right-hand side terms are bounded and periodic. By employing the method of Euler’s broken line, we obtain the existence and nonexistence results. Furthermore, we obtain the precise asymptotic power exponent.
\end{abstract}
\begin{keyword}
Parabolic $k$-Hessian equation, asymptotic behavior, entire solutions, periodic right-hand side, Liouville type theorem
\end{keyword}

\maketitle

\section{Introduction and main results}
In this paper, we consider the existence of entire solutions to a class of parabolic $k$-Hessian equations
\begin{equation}
-u_t \left(\sigma_k(\lambda(D^2u))\right)^{\alpha}=f(|x|)g(t),\quad\text{in }\mathbb{R}^{n+1}_-\coloneqq\mathbb{R}^n\times(-\infty,0].\label{one1}
\end{equation}
In \eqref{one1}, $\sigma_k(\lambda)$ denotes the $k$-th elementary symmetric function for $k=1,\cdots,n$, which means
\begin{equation}
\sigma_k(\lambda)=\sum_{1\leq i_1<\cdots<i_k\leq n}\lambda_{i_1}\cdots\lambda_{i_k},\notag
\end{equation}
$\lambda(D^2u)$ denotes the eigenvalues of the Hessian matrix $D^2u$, and $\alpha\in\mathbb{R}\backslash\{0\}$. The right-hand side (RHS) of \eqref{one1} is of separable variable form and satisfies the following hypothesis:
\[f\text{ and }g\text{ are given positive continuous functions, }f\text{ has a finite upper bound on }[0,+\infty),\tag{H1}\]
and we denote this upper bound by $M_f$.
\par
The fully nonlinear operator $\sigma_k(\lambda(D^2u))$ is a generalization of Laplace ($k=1$) operator and Monge-Amp\`ere operator ($k=n$), which both have significant applications in theoretical and practical fields. The $k$-Hessian equations themselves also play an important role in researching special Lagrangian geometry \cite{CNS}, the Minkowski type problem \cite{CW} and prescribed Weingarten curvature problem \cite{GLM}.
\par
For $\alpha=1$, there are many studies focussing on the Liouville type theorem of the solutions to
\begin{equation}-u_t\sigma_k(\lambda(D^2u))=1.\label{pkH 1}\end{equation}
For $k=n$, with a necessary assumption that the partial derivative of the solution with respect to time is bounded in $\mathbb{R}^{n+1}_-$, \cite{GH} extends the classical J${\rm \ddot{o}}$rgens-Calabi-Pogorelov theorem for elliptic Monge-Amp\`ere equations to the parabolic case.
\begin{theorem}[\cite{GH}]\label{GH 1}
Let \( u \in C^{4,2} (\mathbb{R}^{n+1}_- )\) be a parabolically convex solution to \eqref{pkH 1} with $k=n\geq2$. Suppose that there exist constants \( m_1 \geq m_2 > 0 \) such that for all \( (x, t) \in \mathbb{R}^n \times (-\infty, 0] \), \( -m_1 \leq u_t (x, t) \leq -m_2\). Then \( u \) has the form
\begin{equation}\label{u=1+1}
u(x, t) = -Ct + p(x),\text{ where } C > 0 \text{ is a constant and } p(x) \text{ is a convex quadratic polynomial}.
\end{equation}
\end{theorem}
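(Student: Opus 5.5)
The plan follows the Gutiérrez–Huang strategy: reduce to the elliptic Jörgens–Calabi–Pogorelov theorem by proving interior $C^{2,\alpha}$-type estimates for \eqref{pkH 1} with $k=n$ under parabolic rescaling, and then apply a Liouville argument. The equation is $-u_t\det D^2u=1$ with $u$ convex in $x$ and nonincreasing in $t$. The bound $m_2\le -u_t\le m_1$ gives $1/m_1\le \det D^2 u\le 1/m_2$ on each time slice, so every slice $u(\cdot,t)$ is a convex solution of a Monge–Ampère equation with pinched right-hand side.

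\textbf{Step 1 (rescaling).} For $R>0$ set
\[
u_R(x,t)=R^{-2}u(Rx,R^2t),
\]
which again solves $-\partial_t u_R\det D^2u_R=1$ with the same bounds on $\partial_t u_R$. I would normalise by subtracting the supporting affine function at $(0,0)$ and working on parabolic sections
\[
S_h=\{(x,t): t\in(-h,0],\ u(x,t)<\ell(x)+h\},
\]
in the Caffarelli style.

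\textbf{Step 2 (geometry of sections).} Using the pinched determinant on slices and the Lipschitz control in $t$ coming from $u_t$, I would show that the slices of $S_h$ are comparable to ellipsoids of volume about $h^{n/2}$. They should vary slowly in $t$ for $t\in(-h,0]$, since $|u(x,t)-u(x,0)|\le m_1h$.

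\textbf{Step 3 (regularity).} Next I would prove that these sections have bounded eccentricity uniformly in $h$. This is a parabolic version of Pogorelov's estimate. Take a maximum principle argument on the test function
\[
(h-u+\ell)\,e^{\beta|Du|^2/2}\,u_{ee}
\]
over $S_h$, with $e$ a unit direction. Differentiate the equation in the logarithmic form $\log(-u_t)+\log\det D^2u=0$. The linearised operator is
\[
L=-\partial_t/u_t+u^{ij}\partial_{ij},
\]
which is parabolic and uniformly so after the Pogorelov bound. The outcome should be $D^2u\le C$ on the half-section, with $C$ independent of $h$. Given that bound, the linearised operator is uniformly parabolic. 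Krylov–Safonov and Evans–Krylov parabolic theory, which is available since $u\in C^{4,2}$, then give a uniform H\"older bound on $D^2u$ and $u_t$ in the normalised sections.

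\textbf{Step 4 (Liouville).} Blowing down, the H\"older seminorms scale like $h^{-\gamma/2}\to0$ as $h\to\infty$. Hence $D^2u$ and $u_t$ are constant, which gives $u=-Ct+p(x)$ with $p$ a convex quadratic and $C>0$.

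The main obstacle is Steps 2–3: controlling the shape of sections uniformly in $h$, that is, proving the eccentricity does not blow up without the strict convexity given for free in the elliptic case. I would first try to obtain it by comparing each time slice with the elliptic theory, using Caffarelli's $W^{2,p}$ and strict convexity results on slices. The $t$-direction would then be handled through the two-sided bound on $u_t$, before running the Pogorelov computation.
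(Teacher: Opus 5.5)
The paper gives no proof of this statement; it is quoted from \cite{GH} as background. Measured against the Guti\'errez--Huang route, your architecture is the right one: normalised bowl-shaped sections, a parabolic Pogorelov estimate, Krylov--Safonov and Evans--Krylov in normalised coordinates, then a blow-down. However, the step you single out as the main obstacle is planned in a way that cannot work.

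You propose to get bounded eccentricity of the sections \emph{before} the Pogorelov computation, from the elliptic theory on each time slice. On a slice the only information is $\det D^2u(\cdot,t)=1/(-u_t)\in[1/m_1,1/m_2]$. For merely pinched right-hand sides, the available regularity theory (strict convexity, $C^{1,\gamma}$, $W^{2,1+\varepsilon}$) gives no control on the shape of sections across scales. Solutions of pinched equations invariant under $(x_1,x_2)\mapsto(\lambda x_1,\lambda^{a}x_2)$, $a\neq1$, have sections whose eccentricity degenerates like a power of the height; this is the mechanism behind Wang's counterexamples to $W^{2,p}$ regularity. Caffarelli's $W^{2,p}$ estimates for large $p$ would need the right-hand side to have small oscillation at every large scale, i.e.\ $u_t$ nearly constant, which is the conclusion. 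Eccentricity control has to come from an estimate that uses the exact equation differentiated twice, and that is what Pogorelov's estimate does.

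So reverse the order: bounded eccentricity is an output of the Pogorelov estimate, not an input.
\begin{itemize}
\item[(i)] Subtract the tangent affine function $\ell$ of $u(\cdot,0)$ at $0$. Use the full bowl $Q_h=\{(x,t):t\le0,\ u<\ell+h\}$. It is bounded in time since $u(x,t)\ge u(x,0)+m_2|t|$, and $u=\ell+h$ on its whole parabolic boundary.
\item[(ii)] No roundness is needed to normalise it. Take the John map $T_hx=A_h(x-x_h)$ of its top slice and set $\tilde u(y,s)=h^{-1}(u-\ell)(T_h^{-1}y,hs)$. The slice volume is comparable to $h^{n/2}$, so $-\tilde u_s\det D^2\tilde u=h^{-n}(\det A_h)^{-2}\in[c,C]$.
\item[(iii)] The parabolic Pogorelov estimate on this normalised bowl bounds $D^2\tilde u$ from above at $(T_h0,0)$, where $1-\tilde u=1$. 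The determinant bound gives the matching lower bound there.
\item[(iv)] We have
\[
D^2\tilde u(T_h0,0)=h^{-1}A_h^{-T}\,D^2u(0,0)\,A_h^{-1},
\]
and $D^2u(0,0)$ is one fixed positive definite matrix. This forces $\|A_h\|\le Ch^{-1/2}$ and $\|A_h^{-1}\|\le Ch^{1/2}$, so the sections are uniformly comparable to balls of radius $\sqrt h$.
\item[(v)] Then $Q_{h/2}\supset B_{c\sqrt h}\times(-ch,0]$, and the parabolic H\"older seminorms of $D^2u$ and $u_t$ there are $O(h^{-\gamma/2})$. With this, your Step 4 goes through.
\end{itemize}

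There are also two smaller corrections. First, the linearised operator is $L=u^{ij}\partial_{ij}+\partial_t/u_t=u^{ij}\partial_{ij}-\partial_t/(-u_t)$; with your sign it is backward parabolic. With the correct sign, $Lu_{ee}=u^{ik}u^{jl}u_{ije}u_{kle}+u_{te}^2/u_t^2\ge0$ and $L(h-u+\ell)=-(n+1)$, so the Pogorelov computation is essentially the elliptic one. Second, do not cut $S_h$ off at $t=-h$. When $m_2<1$ the bottom face then lies in the parabolic boundary with $h-u+\ell>0$, and the maximum-principle argument breaks down.
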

\cite{ZBW} reduces the regularity requirement of $u$ in Theorem \ref{GH 1} to $C^{2,1}$. For general $k$, \cite{BCGJ} obtains that any strictly convex solution to $\sigma_k(\lambda(D^2u))=1$ with a lower quadratic growth condition must be a quadratic polynomial. On this basis, \cite{NT} develops the method for elliptic case given in \cite{BCGJ}, and applies it to the parabolic case. \cite{NT} proves that any parabolically convex solution $u$ to \eqref{pkH 1} with assumptions that $u_t$ is bounded and $u(x,0)\geq A|x|^2-B$ for constants $A$, $B>0$, must also be of the form \eqref{u=1+1}. The optimal expression form of Liouville type theorem for \eqref{pkH 1} is still receiving continuous attention. When the assumptions are reduced or removed, the solutions may not be as \eqref{u=1+1}.
\par
In fact, there are examples occurring in the above situation. The viscosity solutions to \eqref{pkH 1} may not of the form \eqref{u=1+1}, but are radially symmetric in $x$-variable and of variable separated form
\begin{equation}
u(x,t)=w(t)v(r),\label{one2}
\end{equation}
where $r=\left| x\right|\in[0,\infty)$, $t\in(-\infty,0]$. \cite{GH} (for $k=n$) and \cite{NT} (for general $k$) give the following particular viscosity solution:
\[u(x,t)=(-(k+1)t)^{\frac{1}{k+1}}\cdot\frac{k+1}{(2k)^{\frac{k}{k+1}}\left(C_n^k(1+k-\frac{2k}{n})\right)^{\frac{1}{k+1}}}|x|^{\frac{2k}{k+1}},\]
which is of the form \eqref{one2}. Inspired by this example, \cite{ABL} first considers the entire solution to \eqref{pkH 1} for $k=n$ with $u_t$ is unbounded, and obtains the existence of the smooth solutions of the form \eqref{one2} and their asymptotic behavior. As a corollary, \cite{ABL} also investigates
the Liouville type theorem for parabolic Monge-Amp\`ere equations in the sense of the separable variable radial solution.
\par
For different $\alpha\in\mathbb{R}\backslash\{0\}$, there are also significant research findings when considering about
\begin{equation}\label{alpha RHS=1}
-u_t \left(\sigma_k(\lambda(D^2u))\right)^{\alpha}=1.
\end{equation}
For $k=n$, more general parabolic Monge-Amp\`ere equations, including \eqref{alpha RHS=1} with $\alpha=\pm 1/n$ and several other meaningful forms, are studied in \cite{XB}, while Theorem \ref{GH 1} is also generalized. \cite{LT} considers the case that $\alpha=1/(n+2)$ and investigates the equations as affine normal flows. Recently, \cite{CJ} generalizes the results given in \cite{ABL} from parabolic Monge-Amp\`ere equations to \eqref{alpha RHS=1}. The authors of \cite{CJ} also conduct the existence, uniqueness and nonexistence of smooth entire classical solutions to \eqref{alpha RHS=1} with $u_t$ is unbounded. Also as corollaries, they obtain the asymptotic behavior of those solutions and a Liouville type theorem for \eqref{alpha RHS=1} in the sense of the separable variable radial solution.
\par
When the RHS of \eqref{one1} is more general than $1$, like being bounded or periodic, \eqref{one1} has also been studied in several papers. For bounded RHS, especially for the case that the RHS is a perturbation of $1$ at infinity, we refer to \cite{QB} (for elliptic Monge-Amp\`ere case and the RHS is H${\rm \ddot{o}}$lder continuous), \cite{ZGB} (for parabolic Monge-Amp\`ere case), \cite{BLZ} (for parabolic Monge-Amp\`ere case with new asymptotic behavior), \cite{WB} (for elliptic $k$-Hessian case), \cite{ZhouB} (for parabolic $k$-Hessian case) and \cite{AB} (for parabolic $k$-Hessian case with new asymptotic behavior). For periodic RHS, \cite{CY} proves that the solutions to elliptic Monge-Amp\`ere equations must be as a quadratic polynomial plus a periodic function of $x$. Then, the case that the RHS function is asymptotically close to a periodic function is considered by \cite{TZ}, and the authors prove that the function given by the solution minus a specific quadratic polynomial must be also asymptotically close to a periodic function. And \cite{ZB} generalizes this work to parabolic Monge-Amp\`ere equations $-u_t\det D^2u=f(x)$, where $f$ is a sufficiently smooth positive periodic function of $x$. Therefore, the equations which we focus on in this paper all have rich research backgrounds and theoretical significance.
\par
In this paper, we promote the method used in \cite{ABL} and \cite{CJ}, consider \eqref{one1} with more general right-hand side terms and obtain the existence and nonexistence. In fact, when $1$ is the RHS, we can set $f\equiv 1\equiv g$ and then the RHS satisfies (H1), so our work is a natural promotion of previous work. Hereinafter, we always assume that the RHS of \eqref{one1} satisfies (H1).
\begin{theorem}[existence and nonexistence]\label{the1}
When $\alpha>0$, the equation \eqref{one1} has a positive classical parabolically $k-$convex solution $u(x,t)$ of the form \eqref{one2}, which means there exists a decreasing function $w(t)$ and a $k-$convex function $V(x)=v(r)$ such that $u(x,t)=w(t)v(r)>0$, is the classical solution to \eqref{one1} on $\mathbb{R}^{n+1}_-$. When $-1/k<\alpha<0$, the equation \eqref{one1} admits no positive classical parabolically $k$-convex entire solution of the form \eqref{one2}.
\end{theorem}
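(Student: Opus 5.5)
The plan is to reduce \eqref{one1} to two ODEs by inserting the ansatz \eqref{one2}. Write $u(x,t)=w(t)v(r)$ with $V(x)=v(|x|)$. Then $u_t=w'v$ and $D^2u=w\,D^2V$, so $\sigma_k(\lambda(D^2u))=w^k\sigma_k(\lambda(D^2V))$ whenever $w>0$. For radial $V$ the eigenvalues are $v''$ (once) and $v'/r$ ($n-1$ times), which gives
\[
\sigma_k(\lambda(D^2V))=\frac{C_{n-1}^{k-1}}{k}\,r^{1-n}\bigl(r^{n-k}(v')^k\bigr)' .
\]
Equation \eqref{one1} then becomes $\bigl(-w'w^{k\alpha}\bigr)\cdot\bigl(v\,\sigma_k(\lambda(D^2V))^{\alpha}\bigr)=f(r)g(t)$. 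Separating variables, there is a constant $\mu$ with $-w'w^{k\alpha}=\mu g(t)$ and $v\,\sigma_k^{\alpha}=f(r)/\mu$. Positivity of $u$ and $f$, together with $k$-convexity (which forces $\sigma_k(\lambda(D^2V))>0$ when $w>0$), gives $w,v>0$ and $\mu>0$. I then normalize $\mu=1$ by rescaling $w$.

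\textbf{The time equation.} For $\alpha>-1/k$ we have $k\alpha+1>0$, and the equation reads $(w^{k\alpha+1})'=-(k\alpha+1)g$. Hence
\[
w(t)=\Bigl(c+(k\alpha+1)\int_t^0 g(s)\,ds\Bigr)^{1/(k\alpha+1)},\qquad c>0 .
\]
This $w$ is positive, decreasing and $C^1$ on $(-\infty,0]$, because $g>0$ is continuous. So the time part never obstructs either statement, and everything rests on the radial equation
\[
\bigl(r^{n-k}(v')^k\bigr)'=\frac{k}{C_{n-1}^{k-1}}\,r^{n-1}\Bigl(\frac{f(r)}{v}\Bigr)^{1/\alpha},\qquad v(0)=a>0,\ v'(0)=0 .
\]

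\textbf{Existence for $\alpha>0$.} I would write the radial problem as the integral equation
\[
v'(r)=\Bigl(\tfrac{k}{C_{n-1}^{k-1}}\,r^{k-n}\int_0^r s^{n-1}\bigl(f(s)/v(s)\bigr)^{1/\alpha}ds\Bigr)^{1/k}.
\]
I would obtain a local solution near the singular point $r=0$ by Euler's broken line (Peano-type) construction, using continuity of $f$ and equicontinuity of the approximations.

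Since $v'\ge0$, we have $v\ge a$, so the integrand is bounded by $(M_f/a)^{1/\alpha}$ by (H1). This yields $v'(r)\le Cr^{1/k}$, so $v$ cannot blow up, and the solution extends to all of $[0,\infty)$ by continuation.

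For regularity, near $0$ the formula gives $v'(r)/r\to\bigl(f(0)/a\bigr)^{1/(k\alpha)}/(C_n^k)^{1/k}$ and $v''(0)$ equals the same limit. Hence $V\in C^2(\mathbb{R}^n)$.

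For $k$-convexity, $v'>0$ for $r>0$ and $\sigma_k(\lambda(D^2V))>0$. Using the radial formula $\sigma_j=C_{n-1}^{j-1}\bigl(v''(v'/r)^{j-1}+\tfrac{n-j}{j}(v'/r)^j\bigr)$, positivity of $\sigma_k$ forces $v''>-\tfrac{n-k}{k}v'/r$. This bound in turn gives $\sigma_j>0$ for all $j\le k$. Then $u=wv$ is the desired solution.

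\textbf{Nonexistence for $-1/k<\alpha<0$.} Set $p=-1/\alpha>k$. Then $(f/v)^{1/\alpha}=v^pf^{-p}\ge M_f^{-p}v^p$, so any global positive solution satisfies
\[
\bigl(r^{n-k}(v')^k\bigr)'\ge c\,r^{n-1}v^p ,
\]
with $v$ increasing. I will show by a Keller--Osserman type argument that such $v$ blows up at finite $r$, which is a contradiction.

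The steps are as follows. First, $v$ increasing gives $(v')^k(r)\ge c\,r^{k-n}\int_{r/2}^r s^{n-1}v^p\,ds\ge c\,r^{k}v(r/2)^p$, so $v\to\infty$ polynomially. Second, on $[R,\infty)$ I use the expansion $\bigl(r^{n-k}(v')^k\bigr)'=r^{n-k}\bigl(k(v')^{k-1}v''+\tfrac{n-k}{r}(v')^k\bigr)$ and integrate against $v'$ to obtain an energy inequality of the form $(v')^{k+1}\gtrsim v^{p+1}-C$ for large $r$.

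Once the energy inequality holds, $\int^\infty v^{-(p+1)/(k+1)}\,dv<\infty$ because $p>k$, so $v$ reaches $+\infty$ at a finite radius. The main obstacle I expect is controlling the first-order damping term $\tfrac{n-k}{r}(v')^k$ in the energy estimate. I would first try to absorb it by working on dyadic annuli $[R,2R]$, where $r^{n-k}$ is comparable to a constant.
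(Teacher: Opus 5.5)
Your existence half follows the paper's argument. It has the same separation and normalization, the explicit $w$, and the integral form \eqref{v fangcheng} of the radial equation. It builds an Euler broken-line local solution and continues it using the a priori bound from $v\ge a$ and $f\le M_f$. It uses the limit of $v'(r)/r$ at the origin, and gets $k$-convexity from $v''>(1-\frac nk)\frac{v'}{r}$; the paper gets this from $J'>0$, which is equivalent to $\sigma_k>0$. The a priori bound is $v'(r)\le Cr$, not $Cr^{1/k}$, but this is harmless. For nonexistence you depart from the paper. The paper sandwiches $v'$ between two constant-coefficient problems and then invokes Lemma 2.1 of \cite{CJ} with a comparison principle. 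That sandwich uses the lower bound $m_f$ from (H2), which Theorem \ref{the1} does not assume. Your Keller--Osserman route needs only $f\le M_f$, so it fits the stated hypotheses better.

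As written, though, the decisive step is missing: you only announce the energy inequality. If you multiply the expanded form by $v'$, the term $\frac{n-k}{r}(v')^{k+1}\ge0$ stays on the left and cannot be discarded. Also, what one can actually prove is not a uniform $(v')^{k+1}\gtrsim v^{p+1}-C$, because the constant depends on the base point. So the Keller--Osserman integral test cannot be applied verbatim. Here is one way to close the gap. Set $\Phi=r^{n-k}(v')^k$; then $\Phi'\ge c\,r^{n-1}v^p$ and $\Phi^{1/k}=r^{\frac{n-k}{k}}v'\ge0$, hence
\[
\bigl(\Phi^{\frac{k+1}{k}}\bigr)'=\tfrac{k+1}{k}\,\Phi^{\frac1k}\Phi'\ge c'\,r^{\,n-1+\frac{n-k}{k}}\bigl(v^{p+1}\bigr)'.
\]
This is exactly the integrating factor $r^{(n-k)(k+1)/k}$ that removes the damping term. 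Integrate over $[R,r]\subset[R,2R]$ with $R\ge1$, bounding $s^{\,n-1+\frac{n-k}{k}}\ge R^{\,n-1+\frac{n-k}{k}}$ and $r^{\frac{(n-k)(k+1)}{k}}\le(2R)^{\frac{(n-k)(k+1)}{k}}$. This gives $(v')^{k+1}(r)\ge c''R^{k-1}\bigl(v(r)^{p+1}-v(R)^{p+1}\bigr)$ with $c''$ independent of $R$. Now set $W=v/v(R)$ and separate variables on $(R,2R]$, where $W>1$. This yields
\[
c\,R^{\frac{2k}{k+1}}\,v(R)^{\frac{p-k}{k+1}}\le\int_1^{\infty}\frac{dW}{(W^{p+1}-1)^{1/(k+1)}}<\infty .
\]
The integral converges at $W=1$ because $\frac1{k+1}<1$, and at infinity because $p>k$. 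Since $v\ge a>0$, letting $R\to\infty$ gives the contradiction. With this inserted your argument is complete, and your first step showing $v\to\infty$ is no longer needed.
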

\begin{remark}
The cases where $\alpha<-1/k$ is omitted here for ensuring the existence of $w(t)$. In fact, this case may deduce \eqref{two4} is meaningless in the real sense. When $\alpha=-1/k$, the entire solution stated in Theorem \ref{the1} still exists. This case is quite similar to Remark 4.1 in \cite{CJ} and we omit it here. As for the case where $\alpha=0$, the equation \eqref{one1} degenerates.
\end{remark}
Then, we add the hypothesis that $f$ and $g$ are both upper and lower bounded on their domains of definition, which means there exists constants $m_f$, $M_f$, $m_g$ and $M_g$ such that
\[f\text{ and }g\text{ satisfy (H1), }0<m_f\leq f\text{ on }[0,\infty)\text{ and }0<m_g\leq g\leq M_g<\infty\text{ on }(-\infty,0].\tag{H2}\]
Under the hypothesis (H2), we can obtain a rough asymptotic behavior of the entire solution to \eqref{one1} given in Theorem \ref{the1}.
\begin{proposition}\label{pro1}
Suppose $f$ and $g$ satisfy {\rm (H2)} and 
\(\alpha>\max\{0, \frac{2k-n}{kn}\}\), then the solution $u(x,t)=w(t)v(r)$ given by Theorem \ref{the1} satisfies that $u(x,t)\to\infty$ if and only if $-t+|x|^2\to\infty$. And there exist two constants $\Lambda_1$ and $\Lambda_2$ depending on $n$, $k$, $\alpha$ and $f$, such that
\[\Lambda_1r^{\frac{2k\alpha}{k\alpha+1}}\leq v(r)\leq 1+\Lambda_2 r^{\frac{2k\alpha}{k\alpha+1}},\quad r\geq0,\]
where we assume that $v(0)=1$.
\end{proposition}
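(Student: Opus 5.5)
We would reduce \eqref{one1} to two ODEs using the separable structure from Theorem \ref{the1}, obtain both power bounds on $v$ from a monotone integral identity, and then combine them with a growth estimate for $w$.

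\textbf{Step 1: separation of variables.} Substituting $u=w(t)v(r)$ into \eqref{one1} and using that $\sigma_k$ is homogeneous of degree $k$, the equation becomes
\[
-\frac{w'(t)\,w(t)^{k\alpha}}{g(t)}=\frac{f(r)}{v(r)\,\sigma_k(\lambda(D^2V))^{\alpha}} .
\]
Both sides must equal a constant $c>0$; this is the splitting already used in Theorem \ref{the1}. For radial $V(x)=v(r)$ we have
\[
\sigma_k(\lambda(D^2V))=\frac{1}{k}C_{n-1}^{k-1}\,r^{1-n}\bigl(r^{n-k}(v')^k\bigr)' .
\]
With the normalisation $v(0)=1$, $v'(0)=0$ this gives
\[
r^{n-k}(v')^k=C_0\int_0^r s^{n-1}\Bigl(\frac{f(s)}{v(s)}\Bigr)^{1/\alpha}ds ,
\]
where $C_0$ depends on $n,k,\alpha,c$. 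The right-hand side is positive, so $v'>0$ and $v\ge1$ is increasing. Throughout, write $\beta=\frac{2k\alpha}{k\alpha+1}$.

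\textbf{Step 2: lower bound for $v$.} Since $f\ge m_f$ and $v$ is increasing, the integrand is at least $m_f^{1/\alpha}s^{n-1}v(r)^{-1/\alpha}$. Hence $(v')^k\ge c_1 r^k v^{-1/\alpha}$, that is, $v'\ge c_2\,r\,v^{-1/(k\alpha)}$. Integrating $\bigl(v^{1+1/(k\alpha)}\bigr)'\ge c_3 r$ gives $v^{(k\alpha+1)/(k\alpha)}\ge 1+c_4r^2$. Therefore $v\ge \Lambda_1 r^{\beta}$, and also $v\ge1$.

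\textbf{Step 3: upper bound for $v$.} Using $f\le M_f$ and $v\ge\max\{1,\Lambda_1s^\beta\}$, the integrand is at most $M_f^{1/\alpha}s^{n-1}\min\{1,\Lambda_1^{-1/\alpha}s^{-\beta/\alpha}\}$. Now
\[
n-\frac{\beta}{\alpha}=n-\frac{2k}{k\alpha+1}>0 \iff \alpha>\frac{2k-n}{kn}.
\]
This is exactly where the hypothesis on $\alpha$ enters, and it is the main obstacle: without it the integral is dominated by the region near $0$ and the exponent $\beta$ is lost. Under the hypothesis, $\int_0^r\le \min\{r^n/n,\;C r^{n-\beta/\alpha}\}\le C' r^{n-\beta/\alpha}$ for all $r\ge0$. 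Then
\[
(v')^k\le C r^{k-\beta/\alpha},\qquad v'\le C r^{1-\beta/(k\alpha)} .
\]
Since $2-\beta/(k\alpha)=\beta$ by the choice of $\beta$, integrating from $0$ yields $v\le 1+\Lambda_2 r^\beta$. Both constants come from $m_f$, $M_f$, $n$, $k$, $\alpha$ and the separation constant $c$, which can be normalised.

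\textbf{Step 4: the equivalence $u\to\infty \iff -t+|x|^2\to\infty$.} From $-w'w^{k\alpha}=c\,g$ with $m_g\le g\le M_g$, we get that $w$ is decreasing and $(w^{k\alpha+1})'$ is bounded between negative constants. Hence
\[
w(0)\le w(t),\qquad c_5(-t)^{1/(k\alpha+1)}\le w(t)\le C_5\bigl(1+(-t)\bigr)^{1/(k\alpha+1)} .
\]
Consequently
\[
u=wv\ge \max\{w(0)\Lambda_1 r^\beta,\; c_5(-t)^{1/(k\alpha+1)}\},
\]
which tends to $\infty$ whenever $-t+|x|^2\to\infty$. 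Conversely, if $-t+|x|^2$ stays bounded along a sequence, then $u$ stays bounded there by the upper bounds on $w$ and $v$.
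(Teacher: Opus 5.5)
Your proposal is correct and follows essentially the same route as the paper. You get the lower bound from monotonicity of $v$ and integrating $\bigl(v^{1+1/(k\alpha)}\bigr)'$. You get the upper bound by feeding $v(s)\ge\Lambda_1 s^{\beta}$ back into the integral identity, where $n-\frac{2k}{k\alpha+1}>0$ gives integrability at the origin. The equivalence comes from the growth of $w$ and $v$ plus boundedness on bounded sets. The differences are cosmetic: you rederive the radial identity, and for the converse direction you use an explicit upper bound on $w$ where the paper uses only its monotonicity.
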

If the positive continuous functions $f$ and $g$ further satisfy the hypothesis:
\[f\text{ and }g\text{ are periodic, with }T_f\text{ and }T_g\text{ are the minimal positive periods respectively,}\tag{H3}\]
then we have
\begin{equation}\label{f and g period}
f(r+T_f)=f(r),\quad \forall r\geq0,\quad\text{and}\quad g(t-T_g)=g(t),\quad\forall t\leq0,
\end{equation}
and naturelly $f$ and $g$ satisfy (H2). We will obtain the following refined asymptotic behavior theorem.
\begin{theorem}[Asymptotic behavior]\label{the2}
Suppose $f$ and $g$ satisfy {\rm (H3)} and 
\(\alpha>\max\{0, \frac{2k-n}{kn}\}\), then the $u(x,t)=w(t)v(r)$ given by Theorem \ref{the1} has the following refined asymptotic behavior at infinity:
\begin{equation}
w(t)=C_w(-t)^{\frac{1}{k\alpha+1}}+O\left((-t)^{\frac{1}{k\alpha+1}-1}\right),\quad v(r)=C_vr^{\frac{2k\alpha}{k\alpha+1}}+o(r^{\frac{2k\alpha}{k\alpha+1}}),\label{one6}
\end{equation}
where the constant $C_w$ is only dependent on $k$, $\alpha$ and $g$, and $C_v$ is only dependent on $n$, $k$, $\alpha$ and $f$.
\end{theorem}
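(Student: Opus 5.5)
The plan is to separate variables, handle $w$ by an explicit integration, and then obtain the asymptotics of $v$ by a comparison argument for a radial ODE in which the periodic coefficient is replaced by its average.

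\textbf{Step 1 (the two ODEs).} Write $u=w(t)v(r)$ and $V(x)=v(|x|)$. For radial $V$ one has
\[
\sigma_k(\lambda(D^2V))=\frac{C_{n-1}^{k-1}}{k\,r^{n-1}}\bigl(r^{n-k}(v')^k\bigr)' .
\]
Hence $\sigma_k(\lambda(D^2u))=w^k\sigma_k(\lambda(D^2V))$, and \eqref{one1} becomes
\[
\bigl(-w'w^{k\alpha}\bigr)\cdot\bigl(v\,\sigma_k(\lambda(D^2V))^{\alpha}\bigr)=f(r)g(t).
\]
As in the construction behind Theorem \ref{the1}, we normalize so that $-w'w^{k\alpha}=g(t)$ and $v\,\sigma_k(\lambda(D^2V))^\alpha=f(r)$, with $v(0)=1$ and $v'(0)=0$.

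\textbf{Step 2 (the function $w$).} The first equation integrates explicitly:
\[
w(t)^{k\alpha+1}=w(0)^{k\alpha+1}+(k\alpha+1)\int_t^0 g(s)\,ds .
\]
Let $\bar g=T_g^{-1}\int_{-T_g}^0 g$ be the mean of $g$. By (H3), $\int_t^0 g=\bar g(-t)+O(1)$, with the $O(1)$ bounded by $T_gM_g$. Therefore
\[
w(t)=\bigl((k\alpha+1)\bar g(-t)\bigr)^{\frac1{k\alpha+1}}\bigl(1+O((-t)^{-1})\bigr),
\]
which is the first formula in \eqref{one6} with $C_w=((k\alpha+1)\bar g)^{1/(k\alpha+1)}$. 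This constant depends only on $k$, $\alpha$ and $g$.

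\textbf{Step 3 (integral form for $v$).} The second equation can be written as
\[
r^{n-k}(v')^k=c_{n,k}\int_0^r s^{n-1}f(s)^{1/\alpha}v(s)^{-1/\alpha}\,ds,\qquad c_{n,k}=k/C_{n-1}^{k-1}.
\]
Set $\beta=\frac{2k\alpha}{k\alpha+1}$, and define $\ell=\liminf v/r^\beta$ and $L=\limsup v/r^\beta$. By Proposition \ref{pro1}, $0<\Lambda_1\le\ell\le L\le\Lambda_2$.

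\textbf{Step 4 (averaging lemma).} The key auxiliary fact is the following. If $h>0$ satisfies $h(s)\asymp s^{\gamma}$ with $\gamma>-1$ and $h$ has bounded logarithmic derivative $s h'/h$, then
\[
\int_0^r h f^{1/\alpha}\,ds=\bigl(\overline{f^{1/\alpha}}+o(1)\bigr)\int_0^r h\,ds .
\]
To prove it, split $[0,r]$ into periods of length $T_f$; on each period $h$ varies by a factor $1+O(1/s)$. We apply the lemma with $h=s^{n-1}v^{-1/\alpha}$, using the bounds on $v$ and $v'$ from Proposition \ref{pro1} and the ODE. The condition $\alpha>\frac{2k-n}{kn}$ enters here: it guarantees that the exponent $n-1-\beta/\alpha$ exceeds $-1$, so the integral is dominated by large $s$ and grows like a power.

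\textbf{Step 5 (comparison when $k\alpha>1$).} Insert $v\le (L+\varepsilon)r^\beta$ for large $r$ into the integral form, use the lemma, and integrate $v'$ twice. This gives $\ell\ge A L^{-p}$, where $p=\frac1{k\alpha}$ and $A$ depends only on $n,k,\alpha$ and $\overline{f^{1/\alpha}}$. Symmetrically, $L\le A\ell^{-p}$. Combining the two inequalities gives $L^{1-p^2}\le A^{1-p}\le \ell^{1-p^2}$. When $p<1$, i.e. $k\alpha>1$, this forces $\ell=L=A^{1/(1+p)}=:C_v$, which is exactly the self-similar constant for the averaged equation. That proves the second formula in \eqref{one6} in this range.

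\textbf{Step 6 (main obstacle: $k\alpha\le1$).} When $k\alpha\le 1$, the two-sided iteration of Step 5 does not contract, so a different argument is needed. Here I would pass to Emden--Fowler variables
\[
s=\ln r,\qquad \phi(s)=v/r^\beta,\qquad \psi(s)=r v'/v .
\]
The ODE becomes a planar system $(\phi,\psi)'=F(\phi,\psi)+\bigl(f(e^s)^{1/\alpha}-\overline{f^{1/\alpha}}\bigr)G(\phi,\psi)$. Its averaged part is autonomous with the unique equilibrium $(C_v,\beta)$ in the positive region. The periodicity of $f$ in $r=e^s$ makes the perturbation oscillate ever faster in $s$, so its integrated effect over unit $s$-intervals tends to zero.

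First I would linearize at $(C_v,\beta)$ and check that both eigenvalues have negative real part; the sign should come from $n-k+\dots>0$ under $\alpha>\frac{2k-n}{kn}$. Then I would build a Lyapunov function $E$ for the averaged system whose sublevel sets are forward invariant and shrink to $(C_v,\beta)$. The trajectory stays in a compact set by Proposition \ref{pro1}, and the fast-oscillating perturbation changes $E$ along it only by $o(1)$ per unit of $s$. Together these give $\phi(s)\to C_v$, i.e. $v=C_vr^\beta+o(r^\beta)$. In both regimes $C_v$ depends only on $n$, $k$, $\alpha$ and $f$, as claimed.

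I expect the construction of this Lyapunov function and the uniform control of the oscillatory error to be the hardest step.
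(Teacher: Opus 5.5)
Your Step 2 is the paper's treatment of $w$. Your Step 5 is a correct argument that the paper does not use. Inserting $(\ell-\varepsilon)r^\beta\le v\le(L+\varepsilon)r^\beta$ into the integral form, averaging $f^{1/\alpha}$ against a pure power and integrating gives $\ell\ge AL^{-1/(k\alpha)}$ and $L\le A\ell^{-1/(k\alpha)}$. When $k\alpha>1$ this forces $\ell=L=C_v$ without any dynamical-systems input.

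\textbf{The gap.} The problem is the range $k\alpha\le1$, i.e.\ all $\alpha\le 1/k$, including the Laplacian case $k=\alpha=1$. There Step 6 is a programme, and the decisive step is missing. The eigenvalues of the averaged system at $(C_v,\beta)$ do have negative real parts, but this gives only \emph{local} attraction. Your trajectory is only known to stay in a compact subset of the open quadrant. Nothing you wrote excludes that its $\omega$-limit set is, for instance, a periodic orbit of the averaged system around the equilibrium. You need a \emph{global} strict Lyapunov function on the whole positive quadrant, or another global argument such as a Dulac function. Its decrease away from the equilibrium must dominate the $o(1)$-per-unit-$s$ effect of the oscillating term. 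Saying that $E$ changes by $o(1)$ per unit of $s$ is not enough without such a definite decrease.

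\textbf{What the paper does.} Set $y=r^{-\beta}v$, $p=n-\frac{2k}{k\alpha+1}>0$ and $J(r)=r^{-p}\int_{R_0}^r s^{p-1}y^{-1/\alpha}\,ds$. The paper first averages $f^{1/\alpha}$ inside the integral. It integrates by parts against the bounded primitive of $f^{1/\alpha}-\overline{f^{1/\alpha}}$, using that $r\,(y^{-1/\alpha})'$ is bounded. As a result, in $s=\ln r$ the $(y,J)$ system is autonomous up to a pointwise $o(1)$ error, not a fast-oscillating $O(1)$ one. In the variables $A=\ln(y/y_*)$, $B=\ln(J/J_*)$, $M=B/k-A$, $N=-A/\alpha-B$, the limit system is $\dot A=\beta(e^M-1)$, $\dot B=p(e^N-1)$. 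With $l(x)=e^x-x-1$ and $\lambda=\frac{p\alpha}{k\beta}$, the function $\mathcal L=l(M)+\lambda\,l(N)$ satisfies $\dot{\mathcal L}=-\beta(e^M-1)^2-\lambda p(e^N-1)^2$ for every $\alpha>0$. Hence the only complete bounded trajectory in the positive quadrant is the equilibrium. A translation plus Arzel\`a--Ascoli argument then gives convergence for all admissible $\alpha$ at once, with no split at $k\alpha=1$.

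\textbf{How to repair Step 6.} Your averaged $(\phi,\psi)$ system is conjugate to this limit system via $J=(\phi\psi/\kappa)^k$, with $\kappa^k=n\overline{f^{1/\alpha}}/C_n^k$. So you can take $E=l\bigl(\ln(\psi/\beta)\bigr)+\lambda\,l\bigl(\ln[(\phi/C_v)^{-1/\alpha-k}(\psi/\beta)^{-k}]\bigr)$. After integrating by parts in $r$, the oscillatory term contributes only $O(e^{-s})$ per unit $s$-interval. For this you must also record that $\psi=rv'/v$ stays bounded away from $0$, because $G$ contains $\psi^{1-k}$.
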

\begin{remark}
The asymptotic behavior of $u$ given in Theorem \ref{the2} is the generalization of Theorem 1.3 in \cite{ABL} and Theorem 1.2 in \cite{CJ}. To ensure the existence of the solution given by Theorem \ref{the1}, we only consider the case $\alpha>0$, so the cases $k>n/2$ is what really matters in the research of asymptotic behavior. Owing to space constraints, we have not carried out the computation of the asymptotic behavior of $v$ in the critical case where $k>n/2$ and $\alpha=(2k-n)/(kn)$. Nevertheless, we believe that similar results to  Theorem 1.3 in \cite{CJ} can be derived by following our approach given in this paper. When $k>n/2$, in the cases where $0<\alpha<(2k-n)/(kn)$, the precise asymptotic behavior of $v$ remains unclear, but one can obtain a rough bound estimate by L’Hospital’s rule. For this, we refer to Remark 5.2 in \cite{CJ}.
\end{remark}
\begin{corollary}[Refined asymptotic behavior when $\alpha=1$]\label{the3}
Suppose $f$ and $g$ satisfy \eqref{f and g period} and $\alpha=1$, then the asymptotic behavior of $v$ given in \eqref{one6} can be refined as
\[v(r)=Cr^{\frac{2k}{k+1}}+r^{-\frac{2}{k+1}}F_1(r)+r^{-\frac{k+3}{k+1}}F_2(r)+O(r^{-M_{n,k}}),\]
where $C$ is simply the value of $C_v$ when $\alpha=1$, $F_1$ and $F_2$ are $T_f$-periodic functions with the integral over a period is zero, and $M_{n,k}$ is a constant only depending on $n$ and $k$ with a clear formulation given in Section 4.
\end{corollary}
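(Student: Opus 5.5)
The plan is to work from the radial ODE satisfied by $v$ and bootstrap the first-order asymptotics $v=C_vr^{a}+o(r^a)$, $a=\frac{2k}{k+1}$, from Theorem \ref{the2} by repeated integration by parts against the periodic oscillation of $f$. With $\alpha=1$, substituting $u=w(t)v(r)$ into \eqref{one1} separates the variables as $-w'w^k/g=\lambda=f/(v\,\sigma_k(\lambda(D^2V)))$ for some constant $\lambda>0$. The radial identity $\sigma_k(\lambda(D^2V))=\frac{C_{n-1}^{k-1}}{k}r^{1-n}\bigl(r^{n-k}(v')^k\bigr)'$ then turns the $v$-equation into
\[
\bigl(r^{n-k}(v')^k\bigr)'=c_0\,\frac{r^{n-1}f(r)}{v(r)},\qquad c_0=\frac{k}{\lambda C_{n-1}^{k-1}} .
\]
I would write $f=\bar f+\tilde f$, where $\bar f=T_f^{-1}\int_0^{T_f}f$ and $\tilde f$ has zero mean. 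Then $P(r)=\int_0^r\tilde f$ is bounded and $T_f$-periodic, and I would fix its additive constant so that $P$ also has zero mean.

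\textbf{Step 1 (algebraic rate for the non-oscillating part).} Put $v=C r^{a}(1+h)$ with $C=C_v$, so that $h=o(1)$ by Theorem \ref{the2}. Linearizing the ODE around $Cr^a$ gives, in the variable $s=\log r$, a second-order Euler-type equation for $h$. Its indicial roots have negative real parts; this is exactly where $\alpha=1>\frac{2k-n}{kn}$, i.e.\ $n+k>$ the critical value, enters. Call the smaller decay exponent $\gamma_{n,k}$. By variation of parameters, and treating the quadratic terms in $h$ and the forcing from $\tilde f$ as perturbations, I would show $h=O(r^{-\delta})$ for some $\delta>0$ and then improve it to the sharp rate. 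The constant $M_{n,k}$ will be $\min\{\gamma_{n,k}-a,\;\text{third-order oscillation exponent}\}$ in the final normalization.

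\textbf{Step 2 (first oscillatory correction).} Integrate the ODE from $0$ to $r$. The $\bar f$ part reproduces the leading profile plus $O(1)$ constants. The $\tilde f$ part is integrated by parts:
\[
\int_0^r \frac{s^{n-1}\tilde f}{v}\,ds=\frac{r^{n-1}P(r)}{v(r)}-\int_0^r P(s)\Bigl(\frac{s^{n-1}}{v}\Bigr)'ds .
\]
Relative to the main term $\sim r^{n-a}$, the boundary term has relative size $r^{-1}$. I would repeat the integration by parts once more on the remaining integral, using a second zero-mean periodic antiderivative, to extract a relative-$r^{-2}$ periodic term with an $O(r^{-3})$ remainder. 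Taking the $k$-th root then gives
\[
v'=\tfrac{a}{1}Cr^{a-1}\bigl(1+r^{-1}Q_1(r)+r^{-2}Q_2(r)+\dots\bigr),
\]
with $Q_1=\frac{c_0}{k(\cdots)}P$ periodic with zero mean, and $Q_2$ periodic but possibly with nonzero mean, for instance through $Q_1^2$.

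\textbf{Step 3 (second integration).} Integrate $v'$ once more. The term $r^{a-2}Q_1$ integrates by parts to $r^{a-2}\int Q_1$ plus $O(r^{a-3})$, where $\int Q_1$ is periodic and normalized to zero mean. Since $a-2=-\frac{2}{k+1}$, this is $r^{-\frac{2}{k+1}}F_1$. The zero-mean part of $Q_2$, together with the next boundary term from $Q_1$, produces $r^{a-3}F_2=r^{-\frac{k+3}{k+1}}F_2$. The non-oscillating contributions, namely the mean of $Q_2$, the integration constants, and the feedback of $h$ into $1/v$ in the ODE, must be shown either to cancel or to be $O(r^{-M_{n,k}})$. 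The way to do this is to substitute the ansatz back into the ODE: any non-periodic power term between $r^{a}$ and $r^{-M_{n,k}}$ would have to solve the homogeneous linearized Euler equation of Step 1, so it is forced to carry one of the exponents $a-\gamma$.

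I expect this bookkeeping to be the main obstacle. One has to verify that the mean parts at order $r^{a-2}$ are absorbed, in the sense that they are not produced by the nonlinearity. The first thing I would try is to compute the mean of $Q_2$ explicitly from the $k$-th root expansion and from the $1/v$ feedback, and check that it matches the correction to $C$ already fixed by the asymptotic identity defining $C_v$ in Theorem \ref{the2}. Failing that, I would enlarge the remainder exponent $M_{n,k}$ to $\min\{\gamma_{n,k}-a,\frac{k+3}{k+1}\}$ accordingly, and verify that the result is consistent with the formula given in Section 4.
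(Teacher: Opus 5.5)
\textbf{The gap is in Step 3.} The way out you propose there rests on a false principle: that every non-periodic power term between $r^{a}$ and $r^{-M_{n,k}}$ must solve the homogeneous linearized Euler equation. This ignores that products of oscillating terms have nonzero averages. For $k\ge2$, $\overline{(F_1')^2}>0$ enters through the $k$-th power and the $k$-th root. For every $k$, $\overline{\tilde f F_1}=-c\,\overline{(F_1')^2}$ with $c>0$ enters through $f/v$. These averages are a genuine non-oscillatory source at relative order $r^{-2}$, and they force a particular solution $c_2r^{a-2}=c_2r^{-2/(k+1)}$ in $v$. This is not a homogeneous mode: in the variable $s=r^{a}$ it has exponent $-1/k$, and $\lambda^2+(kA-1)\lambda+A$ equals $\frac{k+1}{k^2}\neq0$ there.

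Concretely, take $k=1$, so $a=1$ and $v''+\frac{n-1}{r}v'=f/v$. Insert $v=Cr+r^{-1}(G+c_2)+r^{-2}H+r^{-3}K+\cdots$ with $G,H,K$ zero-mean and $T_f$-periodic. This gives $G''=\tilde f/C$ and $H''=-(n-3)G'$. Averaging the $r^{-3}$ equation and using $\bar f=(n-1)C^2$ gives $2c_2=-\overline{\tilde fG}/C^2=\overline{(G')^2}/C>0$ for every nonconstant $f$.

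Three consequences follow.
\begin{itemize}
\item The mean of $Q_2$ does not cancel. It also cannot be matched against a correction to $C$: it integrates to $\frac{aC\bar Q_2}{a-2}r^{a-2}$, which cannot affect the leading coefficient, and that coefficient is fixed by $\bar f$ alone.
\item Your fallback $\min\{\gamma_{n,k}-a,\frac{k+3}{k+1}\}$ is wrong. The obstruction sits at order $r^{-2/(k+1)}$, the order of the $F_1$ term itself, not at $r^{-(k+3)/(k+1)}$.
\item With $F_1$ of zero mean, no remainder better than $O(r^{-2/(k+1)})$ holds in general. For $k=1$, $n\ge5$ one has $\gamma_{n,1}-a=-\mathrm{Re}\,\lambda_*>1$, so the expansion you are aiming for is false.
\end{itemize}

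This is exactly why the paper's exponent $M_{n,k}=\min\{-\frac{2k\lambda_*}{k+1},\frac{2}{k+1}\}$ is capped at $\frac{2}{k+1}$. The paper passes to $\Phi(s)=v(r)$ with $s=r^{a}$ and subtracts $Cs+P_1+P_2$, where $P_1=s^{-1/k}F_1(s^{1/a})$ and $P_2=s^{-(k+3)/(2k)}F_2(s^{1/a})$. It then writes an Euler-type equation for the remainder $Q$ in $\ln s$, whose limiting characteristic roots are $\lambda_\pm$, and concludes only $Q=O(s^{\max\{\lambda_*,-1/k\}})$.

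With that $M_{n,k}$, both periodic corrections are themselves $O(r^{-M_{n,k}})$. So the statement really amounts to the rate $v-Cr^{a}=O(r^{-M_{n,k}})$. What you need is your Step 1 carried out sharply. That means identifying the homogeneous modes as $r^{a\lambda_\pm}$, so $\gamma_{n,k}-a=-a\,\mathrm{Re}\,\lambda_*$. It also means proving that the $\tilde f$-forcing contributes only $O(r^{a-2})$, either by exploiting its oscillation once more or by first subtracting the explicit oscillatory profile as the paper does. The mean-value bookkeeping of Steps 2--3 should be dropped, not resolved by a cancellation that does not occur.
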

\begin{remark}
The refined asymptotic behavior of $u$ when $\alpha=1$ given in Corollary \ref{the3} is the generalization of Theorem 1.4 in \cite{ABL} and Theorem 1.4 in \cite{CJ}. The periodic forcing term on RHS of \eqref{one1} poses a challenge to the original method, requiring more refined estimates to balance the effects introduced by the periodic term.
\end{remark}
The paper is organized as follows. In Section 2, we use Euler's broken line method and classical theories of ODE to obtain the existence and nonexistence of the solution to \eqref{one1} with the form \eqref{one2}, and accomplish the proof of Theorem \ref{the1}. In Section 3, the asymptotic behavior of the solution constructed in Theorem \ref{the1} will be given after meticulous analysis and calculation. We also finish the proof of Proposition \ref{pro1} and Theorem \ref{the2} in this section. When $\alpha=1$, we obtain refined asymptotic behavior by isolating the leading term of $v$ near infinity at Section 4. The core technical challenge is to achieve a complete isolation. We overcome it through constructing correction periodic functions whose integral average is zero and give a precise convergence order of the remainder term.

\section{Proof of Theorem \ref{the1}}
\subsection{Preliminaries}
Consider about the existence of the solution to \eqref{one1} with the form \eqref{one2}. It is easy to see that
\begin{equation}
	\frac{\partial u}{\partial x_i}(x)=w(t)v'(r)\frac{x_i}{r},\quad i=1, \dots, n, \notag
\end{equation}
\begin{equation}
	\frac{\partial ^2 u}{\partial x_i \partial x_j}(x)=w(t)\left((\frac{v''(r)}{r^2}-\frac{v'(r)}{r^3})x_ix_j+\frac{v'(r)}{r}\delta_{ij}\right), \quad i, j=1, \dots, n, \notag
\end{equation}
and therefore, with $a\coloneqq w(t)(rv''(r)-v'(r))/{r^3}$ and $b\coloneqq w(t)v'(r)/r$,
\begin{equation}
\notag D^2u=ax^Tx+bI.
\end{equation}
By knowledge of linear algebra, we have
\begin{equation}
	\lambda(D^2u)=(ar^2+b,b,\dots,b)=\left(w(t)v''(r), w(t)\frac{v'(r)}{r}, \dots, w(t)\frac{v'(r)}{r}\right). \notag
\end{equation}
Then for any $r>0$,
\begin{equation}\label{two1}
	\sigma_k(\lambda(D^2u))= w^k(t) \left({C_{n-1}^{k-1}}v''(r)\left(\frac{v'(r)}{r}\right)^{k-1}+{C_{n-1}^{k}}\left(\frac{v'(r)}{r}\right)^k\right).
\end{equation}
\par
Substituting \eqref{two1} and $u_t=w'(t) v(r)$ into \eqref{one1}, we have
\begin{equation}\notag
	-w'(t)w^{k\alpha}(t)v(r)\left({C_{n-1}^{k-1}}v''(r)\left(\frac{v'(r)}{r}\right)^{k-1}+{C_{n-1}^{k}}\left(\frac{v'(r)}{r}\right)^k\right)^{\alpha}=f(r)g(t). 
\end{equation}
The above equation can be converted into the following equations
\begin{numcases}\,	-w'(t) w^{k\alpha}(t)=Cg(t),\quad t<0, \notag\\
v(r)\left({C_{n-1}^{k-1}}v''(r)\left(\frac{v'(r)}{r}\right)^{k-1}+{C_{n-1}^{k}}\left(\frac{v'(r)}{r}\right)^k\right)^{\alpha}=\frac{f(r)}{C},\quad r>0,\notag
\end{numcases}
where $C$ is an arbitrary positive constant.
\begin{remark}
We can only consider the case that $C=1$ without loss of generality. For the proof, we refer to \cite{ABL}.
So hereinafter, we only need to consider the equations
\begin{numcases}\,	-w'(t) w^{k\alpha}(t)=g(t),\quad t<0,\label{two2}\\
v(r)\left({C_{n-1}^{k-1}}v''(r)\left(\frac{v'(r)}{r}\right)^{k-1}+{C_{n-1}^{k}}\left(\frac{v'(r)}{r}\right)^k\right)^{\alpha}=f(r),\quad r>0.\label{two3}
\end{numcases}
\end{remark}
It is obvious that for any $\alpha\in(-\frac{1}{k},0)\cup(0,+\infty)$, the solution to \eqref{two2} must be as 
\begin{equation}
	w(t)=\left(w^{k\alpha+1}(0)+(k\alpha+1)\int_t^0g(s)\,{\rm d}s\right)^{\frac{1}{k\alpha+1}}.  \notag
\end{equation}
In fact, we need $\alpha>-1/k$ to make sure that $w(t)$, the solution to \eqref{two2}, is well-defined, and there is a further explanation in Remark 1.1 of \cite{CJ}.\par
To prove the existence of solutions to \eqref{two3}, we focus on the existence of the solutions to the initial value problem:
\begin{equation}\label{v fangcheng} 
\left\{ \begin{aligned}
&v'(r)=\left(\frac{nr^{k-n}}{C_n^k}\int_0^rs^{n-1}\left(\frac{f(s)}{v(s)}\right)^{\frac{1}{\alpha}}\,{\rm d}s\right)^{\frac{1}{k}},\quad r>0, \\
&v(0)=1. \end{aligned}\right.
\end{equation}
\begin{lemma}\label{v C2}
For any $\alpha\in\mathbb{R}\backslash\{0\}$, if $v$ is a solution to \eqref{v fangcheng}, then $v\in C^2([0,\infty))$ and $v$ is also the classical solution to \eqref{two3} on $[0,\infty)$.
\end{lemma}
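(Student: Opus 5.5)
Write $\Phi(r)\coloneqq \int_0^r s^{n-1}\left(f(s)/v(s)\right)^{1/\alpha}\,{\rm d}s$, so that \eqref{v fangcheng} reads $(v')^k = \frac{n}{C_n^k} r^{k-n}\Phi(r)$. The plan is to (i) show $v$ is $C^2$ on $(0,\infty)$ and satisfies \eqref{two3} there by differentiating this identity; (ii) handle $r=0$ through the asymptotics of $\Phi$ near $0$, which give $v'(0)=0$ and a limit for $v''(r)$; (iii) extend \eqref{two3} to $r=0$ by reading the bracket as its limit. Throughout, a solution $v$ is positive and continuous on $[0,\infty)$ (it must be for $(f/v)^{1/\alpha}$ to make sense), so the integrand $h(s)\coloneqq s^{n-1}(f(s)/v(s))^{1/\alpha}$ is continuous and positive on $(0,\infty)$ and $\Phi\in C^1$ with $\Phi'=h$.

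\emph{Interior regularity.} For $r>0$ we have $\Phi(r)>0$, so $v'(r)=(\frac{n}{C_n^k}r^{k-n}\Phi(r))^{1/k}$ is a composition of $C^1$ functions with a positive argument. Hence $v\in C^2(0,\infty)$ and $v'>0$. Differentiate the identity $C_n^k r^{n-k}(v')^k = n\Phi$ and use $C_n^k(n-k)=nC_{n-1}^k$ and $kC_n^k=nC_{n-1}^{k-1}$. This gives
\[
r^{n-1}\left(C_{n-1}^{k-1}v''\left(\tfrac{v'}{r}\right)^{k-1}+C_{n-1}^{k}\left(\tfrac{v'}{r}\right)^{k}\right)=h(r)=r^{n-1}\left(\tfrac{f}{v}\right)^{1/\alpha}.
\]
Dividing by $r^{n-1}$ and raising to the power $\alpha$ (both sides are positive) yields \eqref{two3} on $(0,\infty)$. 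This works for any $\alpha\neq 0$.

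\emph{Behaviour at $r=0$.} This is the main step. By continuity, $h(s)=s^{n-1}(c_0+o(1))$ with $c_0=(f(0)/v(0))^{1/\alpha}=f(0)^{1/\alpha}>0$. Hence $\Phi(r)=\frac{c_0}{n}r^n(1+o(1))$, and so
\[
v'(r)=\left(\tfrac{c_0}{C_n^k}\right)^{1/k} r\,(1+o(1)),\qquad\text{which gives } v'(0)=0 .
\]
For the second derivative, set $\psi(r)=\Phi(r)/r^n$. Then $\psi\to c_0/n$, and
\[
r\psi'(r)=\frac{h(r)}{r^{n-1}}-\frac{n\Phi(r)}{r^n}\to c_0-c_0=0 .
\]
Since $v'(r)=\beta\, r\,\psi(r)^{1/k}$ with $\beta=(n/C_n^k)^{1/k}$,
\[
v''(r)=\beta\psi^{1/k}+\tfrac{\beta}{k}\psi^{1/k-1}\,r\psi' \to \beta(c_0/n)^{1/k}=\left(\tfrac{c_0}{C_n^k}\right)^{1/k}.
\]
This equals $\lim_{r\to0}v'(r)/r$, which is the one-sided derivative of $v'$ at $0$. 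So $v'$ is differentiable at $0$ and $v''$ is continuous there, i.e.\ $v\in C^2([0,\infty))$. The key detail to get right is precisely the limit $r\psi'(r)\to0$.

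\emph{Equation at $r=0$.} Interpret \eqref{two3} at $r=0$ by its limit, where $v'(r)/r\to v''(0)$. Since $\lambda(D^2V)(0)=(v''(0),\dots,v''(0))$, this is also the natural reading of the Hessian equation at the origin. The bracket then tends to
\[
\left(C_{n-1}^{k-1}+C_{n-1}^{k}\right)v''(0)^k=C_n^k\cdot\frac{c_0}{C_n^k}=c_0 ,
\]
so $v(0)\,(\cdot)^\alpha = 1\cdot c_0^\alpha=f(0)$. Thus \eqref{two3} holds on all of $[0,\infty)$.
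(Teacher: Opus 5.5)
Your proof is correct and takes essentially the same route as the paper: you differentiate $r^{n-k}(v')^k=\frac{n}{C_n^k}\int_0^r s^{n-1}(f/v)^{1/\alpha}\,{\rm d}s$ on $(0,\infty)$ to recover \eqref{two3}, and you use $\int_0^r s^{n-1}(f/v)^{1/\alpha}\,{\rm d}s\sim \frac{f(0)^{1/\alpha}}{n}r^n$ at the origin to get $v'(0)=0$ and $v''(0)=\bigl(f(0)^{1/\alpha}/C_n^k\bigr)^{1/k}$. Your argument that $r\psi'(r)\to0$, hence $\lim_{r\to0}v''(r)=v''(0)$, explicitly proves the continuity of $v''$ at $0$, which the paper's L'Hospital computation of the difference quotient establishes only implicitly; your limit check of \eqref{two3} at $r=0$ likewise spells out a step the paper leaves unstated.
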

\begin{proof}[Proof]
For any $r>0$, we can know from the equation that $v$, as the solution to \eqref{v fangcheng}, is $C^2$ at $r$. We will focus on the regularity of $v$ near $r=0$ next. It can be observed that
\begin{equation}
0\leq \frac{nr^{k-n}}{C_n^k}\int_0^rs^{n-1}\left(\frac{f(s)}{v(s)}\right)^{\frac{1}{\alpha}}\,{\rm d}s\leq \frac{nr^{k-1}}{C_n^k}\int_0^r\left(\frac{f(s)}{v(s)}\right)^{\frac{1}{\alpha}}\,{\rm d}s,\quad r>0,\notag
\end{equation}
and
\begin{equation}
r^{k-1}\int_0^r\left(\frac{f(s)}{v(s)}\right)^{\frac{1}{\alpha}}\,{\rm d}s\to 0,\quad\text{as }r\to 0.\notag
\end{equation}
Thus, we have
\begin{equation}
\lim_{r\to 0}v'(r)=\lim_{r\to 0}\left(\frac{nr^{k-n}}{C_n^k}\int_0^rs^{n-1}\left(\frac{f(s)}{v(s)}\right)^{\frac{1}{\alpha}}\,{\rm d}s\right)^{\frac{1}{k}}=0,\notag
\end{equation}
which means $v\in C^1([0,\infty))$ with $v'(0)=0$. Then by L'Hospital's rule, we have
\begin{equation}
\lim\limits_{r\to0}\left(\frac{v'(r)-v'(0)}{r-0}\right)^k=\lim_{r\to 0}\frac{\frac{n}{C_n^k}\int_0^rs^{n-1}\left(\frac{f(s)}{v(s)}\right)^{\frac{1}{\alpha}}\,{\rm d}s}{r^n}=\lim\limits_{r\to0}\frac{1}{C_n^k}\left(\frac{f(r)}{v(r)}\right)^{\frac{1}{\alpha}}=\frac{(f(0))^{\frac{1}{\alpha}}}{C_n^k}.\notag
\end{equation}
therefore $v\in C^2([0,\infty))$ with $v''(0)=\left((f(0))^{\frac{1}{\alpha}}/C_n^k\right)^{\frac{1}{k}}>0$.\par
By \eqref{v fangcheng}, we deduce that
\[r^{n-k}(v'(r))^k=\frac{n}{C_n^k}\int_0^rs^{n-1}\left(\frac{f(s)}{v(s)}\right)^{\frac{1}{\alpha}}\,{\rm d}s.\]
After taking the derivatives of both sides of the above equation, we have
\[\left(r^{n-k}(v'(r))^k\right)'=\frac{nr^{n-1}}{C_n^k}\left(\frac{f(r)}{v(r)}\right)^{\frac{1}{\alpha}},\quad r>0,\]
and therefore, for any $r>0$,
\[k(v'(r))^{k-1}v''(r)+\frac{n-k}{r}(v'(r))^{k}=\frac{nr^{k-1}}{C_n^k}\left(\frac{f(r)}{v(r)}\right)^{\frac{1}{\alpha}}.\]
It is obvious that $v$ satisfies \eqref{two3} after a direct computation from the above equation.
\par
In conclusion, we have proved that the solution to the given initial value problem \eqref{v fangcheng} is in $C^2([0,\infty))$ and is also the classical solution to \eqref{two3}.
\end{proof}

\begin{remark}\label{rem2}
Without loss of generality, we assume $w(0)=(k\alpha+1)^{1/(k\alpha+1)}$ hereinafter and obtain that
\begin{equation}\label{two4}
w(t)=(k\alpha+1)^{\frac{1}{k\alpha+1}}\left(1+\int_t^0g(s)\,{\rm d}s\right)^{\frac{1}{k\alpha+1}},\quad t\leq0.
\end{equation}
And we only need to consider the case that $v(0)=1$ without loss of generality. The proof is quite similar to that given in Remark 2.1 and Remark 2.2 in \cite{ABL}.
\end{remark}

\subsection{Proof of local existence when $\alpha\neq0$}
In this subsection, we construct a Euler's broken line to proof that there exist local solutions to \eqref{v fangcheng}. Note that local existence holds for any $\alpha\neq0$, including the case $\alpha\leq-1/k$.
\begin{lemma}[Local existence]\label{v local}
For any $\alpha\in\mathbb{R}\backslash\{0\}$, there is a local solution to the initial value problem \eqref{v fangcheng}. More explicitly, there exists a constant $R>0$ and a $v_{\text{loc}}(r)\in C^2([0,R))$ solving \eqref{v fangcheng} in $[0,R)$. 
\end{lemma}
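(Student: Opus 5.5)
The plan is to treat \eqref{v fangcheng} as a fixed-point problem for the integral operator
\[
(\mathcal{T}v)(r)=1+\int_0^r\left(\frac{n\rho^{k-n}}{C_n^k}\int_0^\rho s^{n-1}\left(\frac{f(s)}{v(s)}\right)^{\frac{1}{\alpha}}{\rm d}s\right)^{\frac{1}{k}}{\rm d}\rho ,
\]
and to obtain a fixed point by Euler's broken lines together with Arzel\`a--Ascoli. This gives a Peano-type argument that needs no Lipschitz condition. Such a condition can fail here because of the $1/k$-th power of a quantity vanishing at $r=0$.

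\textbf{Step 1: a priori box.} Fix $\delta=\tfrac12$ and work with continuous $v$ satisfying $\tfrac12\le v\le \tfrac32$ on $[0,R]$. On this range $(f/v)^{1/\alpha}$ is bounded above by a constant $K$ depending only on $M_f$, $\min_{[0,1]}f>0$ and $\alpha$. The sign of $\alpha$ only decides whether we use the upper or the lower bound of $v$; this is why the argument works for every $\alpha\neq0$. Then the inner expression is at most $\frac{n\rho^{k-n}}{C_n^k}\cdot K\rho^n/n=K\rho^k/C_n^k$. Hence the integrand of $\mathcal{T}$ is bounded by $(K/C_n^k)^{1/k}\rho$, and
\[
0\le(\mathcal{T}v)(r)-1\le \tfrac12 (K/C_n^k)^{1/k}r^2 .
\]
Choose $R\le1$ so small that this is $\le\tfrac12$. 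Then $\mathcal{T}$ maps the box into itself, and its images are uniformly Lipschitz with constant $(K/C_n^k)^{1/k}R$.

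\textbf{Step 2: broken lines.} For a mesh $h=R/N$, define $v_N$ on each interval $[jh,(j+1)h]$ by freezing the ``past''. For instance, set $v_N(r)=(\mathcal{T}\tilde v_N)(r)$, where $\tilde v_N$ is the step function equal to $v_N(jh)$ on $[jh,(j+1)h)$; equivalently, the right-hand side is evaluated with $v$ lagged by one mesh step. This defines $v_N$ explicitly and recursively. By Step 1 each $v_N$ stays in the box, and the family is equicontinuous. Arzel\`a--Ascoli then gives a subsequence converging uniformly to some $v$ on $[0,R]$.

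\textbf{Step 3: passage to the limit.} Because $v_N$ is equicontinuous, we have $\|\tilde v_N-v_N\|_\infty\to0$. Since $v\mapsto(f/v)^{1/\alpha}$ is uniformly continuous on $[\tfrac12,\tfrac32]$ and the outer power $(\cdot)^{1/k}$ is uniformly continuous on bounded sets, dominated convergence yields $\mathcal{T}\tilde v_N\to\mathcal{T}v$ uniformly. Hence $v=\mathcal{T}v$, which means $v(0)=1$ and $v'(r)$ is given by the formula in \eqref{v fangcheng} for $r\in(0,R)$. Finally, Lemma \ref{v C2} upgrades this solution to $v_{\text{loc}}\in C^2([0,R))$.

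\textbf{Main obstacle.} The delicate step is Step 3: proving convergence of the $1/k$-power near $\rho=0$, where the inner integral degenerates. I would handle it with the elementary inequality $|a^{1/k}-b^{1/k}|\le|a-b|^{1/k}$ for $a,b\ge0$. The inner integrals differ by at most $C\rho^k\,\omega(\|\tilde v_N-v\|_\infty)$, where $\omega$ is a modulus of continuity. This gives a uniform bound of order $\rho\,\omega^{1/k}$, which tends to zero.
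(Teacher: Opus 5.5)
Your proof is correct and follows the same Peano-type strategy as the paper: Euler-type approximate solutions, Arzel\`a--Ascoli, passage to the limit in the integral equation, and then Lemma \ref{v C2} for $C^2$ regularity. Your variant is somewhat cleaner than the paper's in three ways. Freezing only the functional argument through the step function $\tilde v_N$ makes the paper's $\epsilon$-approximation estimate in $r$ unnecessary. Fixing the box $\tfrac12\le v\le\tfrac32$ before choosing $R$ removes the dependence of $R$ on the approximants in the case $\alpha<0$. Finally, the inequality $|a^{1/k}-b^{1/k}|\le|a-b|^{1/k}$ explicitly justifies passing to the limit under the $1/k$-th power, a step the paper leaves implicit.
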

\begin{proof}[Proof]
	Define
	\begin{equation}
		\mathscr{R} \coloneqq \{ v \in C^0([0, R)): v \text{ is increasing on }(0,R), v(0)=1, v(R)<1+h\},\notag
	\end{equation}
and a map from $[0,R)\times\mathscr{R}$ to $\mathbb{R}$\begin{equation}\notag
		F(r,v)\coloneqq\left(\frac{nr^{k-n}}{C_n^k}\int_0^rs^{n-1}\left(\frac{f(s)}{v(s)}\right)^{\frac{1}{\alpha}}\,{\rm d}s\right)^{\frac{1}{k}}
	\end{equation}
where $h$ is any given positive constant and $R$ is a positive number to be determined. Since $v\geq v(0)=1$, $F(r,v)>0$ for $r>0$ and $F(0,v)=0$. And $F(r, \cdot)$ is increasing for $\alpha<0$ and decreasing for $\alpha>0$.\par
Define an Euler's broken line $\hat{v}$ on $[0,R)$ as
	\begin{equation}
		\begin{cases}\hat{v}(0)=1 \notag \\ \hat{v}(r)=\hat{v}(r_{i-1})+F(r_{i-1},\hat{v})(r-r_{i-1}),\quad r_{i-1}\leq r< r_i, \end{cases}
	\end{equation}
where $0=r_0<r_1< \cdots <r_m=R$ with $m\in\mathbb{N}_+$ and $r_1,\cdots,r_m$ to be determined. In fact, for any $r_{i-1}\leq r<r_i$, the definition of $\hat{v}(r)$ relies only on the value of $F(r_{i-1},\hat{v})$, thus relies only on the value of $\hat{v}$ on $(0,r_{i-1})$, so the definition makes sense. And we claim that for given $h>0$, there exists a enough small $R>0$ such that $\hat{v}\in\mathscr{R}$ and therefore $\hat{v}$ is well-defined. 
\par
If $\alpha>0$, for any $r\in(0, R)$ and any $v\in\mathscr{R}$, we have
\[0<F(r,v)\leq(C_n^k)^{-\frac{1}{k}}M_f^{\frac{1}{k\alpha}}r.\]
Thus if we set 
\[R<(C_n^k)^{\frac{1}{2k}}M_f^{-\frac{1}{2k\alpha}}h^{\frac{1}{2}},\] 
it can be easily seen that $\hat{v}\in\mathscr{R}$ after successive calculatuion from $i=1$ to $i=m$.\par
If $\alpha<0$, we use the fact that $f$ is positive, continuous and upper-bounded to find a $m(R)$ such that
\[0<m(R)\leq f(r)\leq M_f,\quad 0\leq r<R,\]
and therefore for any $r\in(0, R)$ and any $v\in\mathscr{R}$, 
\begin{equation}
0<F(r,v)\leq(C_n^k)^{-\frac{1}{k}}\left(\frac{m(R)}{v(R)}\right)^{\frac{1}{k\alpha}}r. \notag
\end{equation}
In fact, we can combine the above estimate with that for the case $\alpha>0$, namely, we have
\begin{equation}\label{F estimate}
0<F(r,v)\leq(C_n^k)^{-\frac{1}{k}}M_f^{\frac{1+sgn(\alpha)}{2k\alpha}}\left(\frac{m(R)}{v(R)}\right)^{\frac{1-sgn(\alpha)}{2k\alpha}}r,\quad 0<r<R,
\end{equation}
where $sgn(\cdot)$ is the sign function. We assume that $h\leq(C_n^k)^{-\frac{1}{k}}(m(R)/\hat{v}(R))^{\frac{1}{k\alpha}}R^2$, and then can obtain that
\[\frac{m(R)}{\hat{v}(R)}\leq(C_n^k)^{\alpha}\left(\frac{h}{R^2}\right)^{k\alpha}.\]
Take the limit as $R\to0^+$ on both sides of the above equation,  and obtain
\[0<f(0)=\lim\limits_{R\to0^+}\frac{m(R)}{\hat{v}(R)}\leq(C_n^k)^{\alpha}\lim\limits_{R\to0^+}\left(\frac{h}{R^2}\right)^{k\alpha}=0,\]
which makes a contradiction. Thus for sufficiently small $R>0$, $(C_n^k)^{-\frac{1}{k}}(m(R)/\hat{v}(R))^{\frac{1}{k\alpha}}R^2<h$. And it can be easily seen that $\hat{v}\in\mathscr{R}$ by using \eqref{F estimate} and $\hat{v}$ is well-defined. Hereinafter, we fix the value of $R$ to make sure the claim is true.
\par
Then by \eqref{F estimate} and for any $v\in\mathscr{R}$ and any $0<r<R$, we further obtain that
\[0<F(r,v)\leq(C_n^k)^{-\frac{1}{k}}M_f^{\frac{1+sgn(\alpha)}{2k\alpha}}\left(\frac{m(R)}{1+h}\right)^{\frac{1-sgn(\alpha)}{2k\alpha}}r,\]
and after taking the limit as $r\to0$, have
\[\lim\limits_{r\to0}F(r,v)=\lim\limits_{r\to0}(C_n^k)^{-\frac{1}{k}}M_f^{\frac{1+sgn(\alpha)}{2k\alpha}}\left(\frac{m(R)}{1+h}\right)^{\frac{1-sgn(\alpha)}{2k\alpha}}r=0,\]
which convergence is  uniform for $v\in\mathscr{R}$. Hence for any positive number $\epsilon$, there is a $\bar{r}\in (0,R)$ such that for any $r: 0\leq r \leq \bar{r}$, $F(r,v)<\epsilon/2$. Next, we will choose $r_1,\cdots,r_m$ to construct $\hat{v}$ as an $\epsilon-$approximation solution of \eqref{v fangcheng}.
\par
Firstly, for any $r\in[0,\bar{r}]$, using the definition of $\hat{v}(r)$, we have
	\begin{equation}
		\frac{\,{\rm d}\hat{v}(r)}{\,{\rm d}r}=F(r_{i-1},\hat{v}),\quad r_{i-1}< r < r_i \notag
	\end{equation}
and we denote $\,{\rm d}\hat{v}(r_{i-1})/\,{\rm d}r=\hat{v}'_+(r_{i-1})$. It implies for any $r\in[0,\bar{r}]$,
	\begin{equation}
		\left|\frac{\,{\rm d}\hat{v}(r)}{\,{\rm d}r}-F(r,\hat{v})\right|=\left| F(r_{i-1},\hat{v})-F(r,\hat{v})\right|\leq\left| F(r_{i-1},\hat{v})\right|+\left| F(r,\hat{v})\right|. \notag
	\end{equation}
Because of $r_{i-1}\leq r\leq\bar{r}$, we have $F(r_{i-1},\hat{v})<\epsilon/2$ and $F(r,\hat{v})<\epsilon/2$. Hence,
	\begin{equation}
		\left|\frac{\,{\rm d}\hat{v}(r)}{\,{\rm d}r}-F(r,\hat{v})\right|<\frac{\epsilon}{2}+\frac{\epsilon}{2}=\epsilon. \notag
	\end{equation}
	\par
Secondly, for any $r: \bar{r}\leq r<R$, assume that $r_{j-1}\leq r<r_j$,
\begin{align}
		\left|\frac{\,{\rm d}\hat{v}(r)}{\,{\rm d}r}-F(r,\hat{v})\right|&=\left| F(r_{j-1},\hat{v})-F(r,\hat{v})\right|\notag \\
		&=\left(\frac{n}{C_n^k}\right)^{\frac{1}{k}}\left|\left(r_{j-1}^{k-n}\int_0^{r_{j-1}}s^{n-1}\left(\frac{f(s)}{\hat{v}(s)}\right)^{\frac{1}{\alpha}}\,{\rm d}s\right)^{\frac{1}{k}}- \left(r^{k-n}\int_0^rs^{n-1}\left(\frac{f(s)}{\hat{v}(s)}\right)^{\frac{1}{\alpha}}\,{\rm d}s\right)^{\frac{1}{k}}\right|. \notag 
	\end{align}
Noticing that $|x-y|\leq|x^k-y^k|^{1/k}$ for all $x>0$ and $y>0$, we have 
	\begin{align}
		\left|\frac{\,{\rm d}\hat{v}(r)}{\,{\rm d}r}-F(r,\hat{v})\right|&\leq\left(\frac{n}{C_n^k}\right)^{\frac{1}{k}}\left|r_{j-1}^{k-n}\int_0^{r_{j-1}}s^{n-1}\left(\frac{f(s)}{\hat{v}(s)}\right)^{\frac{1}{\alpha}}\,{\rm d}s-r^{k-n}\int_0^rs^{n-1}\left(\frac{f(s)}{\hat{v}(s)}\right)^{\frac{1}{\alpha}}\,{\rm d}s\right|^{\frac{1}{k}} \notag \\
		&\leq\left(\frac{n}{C_n^k}\right)^{\frac{1}{k}}\left(\left|r_{j-1}^{k-n}-r^{k-n}\right|\int_0^{r_{j-1}}s^{n-1}\left(\frac{f(s)}{\hat{v}(s)}\right)^{\frac{1}{\alpha}}\,{\rm d}s+ r^{k-n}\int_{r_{j-1}}^rs^{n-1}\left(\frac{f(s)}{\hat{v}(s)}\right)^{\frac{1}{\alpha}}\,{\rm d}s\right)^{\frac{1}{k}}.\label{vhat cha1}
	\end{align}
Set $C_1\coloneqq M_f^{\frac{1+sgn(\alpha)}{2\alpha}}\left(m(R)/(1+h)\right)^{\frac{1-sgn(\alpha)}{2\alpha}}=C(\alpha,f,h,R)$, then we have
\begin{equation}\label{vhat cha2}
\left|r_{j-1}^{k-n}-r^{k-n}\right|\int_0^{r_{j-1}}s^{n-1}\left(\frac{f(s)}{\hat{v}(s)}\right)^{\frac{1}{\alpha}}\,{\rm d}s\leq C_1\left|r_{j-1}^{k-n}-r^{k-n}\right|\int_0^{r_{j-1}}s^{n-1}\,{\rm d}s\leq\frac{C_1R^n}{n}\left|r_{j-1}^{k-n}-r^{k-n}\right|,
\end{equation}
and
\begin{equation}\label{vhat cha3}
 r^{k-n}\int_{r_{j-1}}^rs^{n-1}\left(\frac{f(s)}{\hat{v}(s)}\right)^{\frac{1}{\alpha}}\,{\rm d}s\leq C_1\bar{r}^{k-n}\int_{r_{j-1}}^r s^{n-1}\,{\rm d}s=\frac{C_1\bar{r}^{k-n}}{n}\left|r^n-r_{j-1}^n\right|
\end{equation}
with that $r^{k-n}$ and $r^n$ are both Liptchitz continuous, for $\epsilon$ determined above, there exists $\delta(\epsilon)>0$ such that for any $r_1$, $r_2$ with $|r_1-r_2|<\delta(\epsilon)$,
\begin{equation}\notag
\left|r_1^{k-n}-r_2^{k-n}\right|<\frac{C_n^k}{2C_1R^n}\epsilon^k,\quad\left|r_1^{n}-r_2^{n}\right|<\frac{C_n^k}{2C_1\bar{r}^{k-n}}\epsilon^k.
\end{equation}
Using the above estimates, \eqref{vhat cha1} , \eqref{vhat cha2} and \eqref{vhat cha3},  once $|r-r_{i-1}|<\delta(\epsilon)$, we still have
\begin{equation}
\left|\frac{\,{\rm d}\hat{v}(r)}{\,{\rm d}r}-F(r,\hat{v})\right|<\frac{\epsilon}{2}+\frac{\epsilon}{2}=\epsilon.\notag
\end{equation}
\par
For this, assume $r_1=\bar{r}$ and $\max \limits_ {2\leq i \leq m}\left| r_{i-1}-r_i\right|<min\{\delta(\epsilon),\bar{r}\}$, then $[0, R)$ is well divided and $\hat{v}_{\epsilon}(r)$ can also be well constructed. For any $r\in[0, R)$,
\begin{equation}
\left|\frac{\,{\rm d}\hat{v}_{\epsilon}(r)}{\,{\rm d}r}-F(r,\hat{v})\right|<\epsilon.\notag
\end{equation}
Thus we prove that the constructed Euler's broken line $\hat{v}_{\epsilon}(r)$ is an $\epsilon$-approximation solution to \eqref{v fangcheng} on $[0, R)$.
	\par
Take a positive constant sequence $\{\epsilon_j\}^{\infty}_{j=1}: \lim\limits_{j \to \infty}\epsilon_j=0$. Once we imitate the above process, $\{\hat{v}_j\}^{\infty}_{j=1}$ can be constructed satisfying for any $j \in \mathbb{N}_+$ and any $r\in[0, R)$,
	\begin{equation}\label{three8}
		\left|\frac{\,{\rm d}\hat{v}_j(r)}{\,{\rm d}r}-F(r,\hat{v}_j)\right|<\epsilon_j.
	\end{equation}
\par
By \eqref{F estimate}, we obtain for any $r\in(0,R)$ and $v\in\mathscr{R}$,
\[0<F(r,v)\leq (C_n^k)^{-\frac{1}{k}}M_f^{\frac{1+sgn(\alpha)}{2k\alpha}}\left(\frac{m(R)}{1+h}\right)^{\frac{1-sgn(\alpha)}{2k\alpha}}R=:C_2,
\]
where $C_2=C(n,k,\alpha,f,h,R)$. It can be kowned by differential mean value theorem that for any $r_1, r_2\in[0,R)$ and any $j\in\mathbb{N}^{+}$, 
\begin{equation}
\left|\hat{v}_j(r_1)-\hat{v}_j(r_2)\right|\leq C_2\left|r_1-r_2\right|. \notag
	\end{equation}
Therefore, for any $\epsilon>0$, there is a $\delta=\epsilon/C_2$ such that $\left|\hat{v}_j(r_1)-\hat{v}_j(r_2)\right|<\epsilon$ for any $r_1, r_2\in[0, R): \left| r_1-r_2\right|<\delta$ and any $j\in\mathbb{N}^{+}$, which means $\{\hat{v}_j\}^{\infty}_{j=1}$ is equicontinuous. For any $r\in[0, R)$, because of 
	\begin{equation}
		\left|\frac{\,{\rm d}\hat{v}_j(r)}{\,{\rm d}r}\right|-\left| F(r,\hat{v}_j)\right|\leq\left|\frac{\,{\rm d}\hat{v}_j(r)}{\,{\rm d}r}-F(r,\hat{v}_j)\right|<\epsilon_j,  \notag
	\end{equation}
we have
	\begin{equation}
		\left|\frac{\,{\rm d}\hat{v}_j(r)}{\,{\rm d}r}\right|<\epsilon_j+\left| F(r,\hat{v}_j)\right|\leq\epsilon_j+C_2.  \notag
	\end{equation}
Hence,
	\begin{equation}
		\left|\hat{v}_j(r)\right|\leq 1+(\epsilon_j+C_2)r\leq 1+(\epsilon_j+C_2)R. \notag
	\end{equation}
By $\lim\limits_{j \to \infty}\epsilon_j=0$, we know $\{\epsilon_j\}^{\infty}_{j=1}$ has an upper bound which is denoted as $\bar{\epsilon}$. Therefore, for any $j\in\mathbb{N}^{+}$ and any $r\in[0, R)$, 
\begin{equation}
\left|\hat{v}_j(r)\right|\leq 1+(\bar{\epsilon}+C_2)R,\notag
\end{equation}
which means $\{\hat{v}_j\}^{\infty}_{j=1}$ is uniformly bounded. According to Arzela-Ascoli Theorem, $\{\hat{v}_j\}_{j=1}^\infty$ has a uniformly convergent subsequence, might as well still denoted as $\{\hat{v}_j\}_{j=1}^\infty$. 
\par
Let $v_{\text{loc}}\coloneqq\lim\limits_{j \to \infty}\hat{v}_j$,  then $v_{\text{loc}}(0)=1$, $v_{\text{loc}}'(0)=0$. By \eqref{three8}, we have for $r\in[0,R)$,
	\begin{equation}
		\frac{\,{\rm d}\hat{v}_j(r)}{\,{\rm d}r}=F(r,\hat{v}_j)+\Delta_j(r),\quad \left|\Delta_j(r)\right|<\epsilon_j.  \notag
	\end{equation}
Calculating the integral and using the initial value, we have
	\begin{equation}
		\hat{v}_j(r)=1+\int_0^r F(s,\hat{v}_j(s))\,{\rm d}s+\int_0^r \Delta_j(s)\,{\rm d}s,\quad \left|\Delta_j(r)\right|<\epsilon_j.  \notag
	\end{equation}
Let $j \to \infty$, we have
	\begin{equation}\label{three9}
		v_{\text{loc}}(r)=1+\int_0^r F(s,v_{\text{loc}}(s))\,{\rm d}s.
	\end{equation}
By the uniformity of $\lim\limits_{j\to\infty}\hat{v}_j=v_{\text{loc}}$ and the continuity of each $\hat{v}_j$, we know $v_{\text{loc}}$ is continuous and differentiable.
\par
Substituting \eqref{three9} into \eqref{v fangcheng}, we can verify that $v_{\text{loc}}(r)\in C^2([0,R))$ is the local solution on $[0, R)$.
\end{proof}

\subsection{Proof of entire existence and uniqueness when $\alpha>0$}
In this subsection, we always assume that $\alpha>0$ and denote $\alpha'=1/\alpha$.
For the existence of the initial value problem \eqref{v fangcheng}, it remains to prove that the local solution given by Lemma \ref{v local} can be extended to $[0,\infty)$. Besides, we prove the uniqueness of the solution to \eqref{v fangcheng}.
\begin{lemma}[Entire existence]\label{v entire}
There exists an entire solution $v(r)$ to the initial value problem \eqref{v fangcheng}.
\end{lemma}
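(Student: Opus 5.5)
The proof is a continuation argument. By Lemma \ref{v local} there is a local solution on some $[0,R)$. Let $R^*$ be the supremum of all $R>0$ for which a solution $v\in C^2([0,R))$ of \eqref{v fangcheng} exists, and suppose for contradiction that $R^*<\infty$. Any solution is increasing, since $v'>0$ for $r>0$. Hence $v\geq 1$, and because $\alpha>0$ we have $(f/v)^{1/\alpha}\leq M_f^{\alpha'}$.

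The key a priori estimate is a bound on $v'$ that does not depend on the interval. From \eqref{v fangcheng},
\[0\le v'(r)\le \left(\frac{nr^{k-n}}{C_n^k}\int_0^r s^{n-1}M_f^{\alpha'}\,{\rm d}s\right)^{\frac1k}=(C_n^k)^{-\frac1k}M_f^{\frac{1}{k\alpha}}\,r .\]
This gives
\[1\le v(r)\le 1+\tfrac12 (C_n^k)^{-\frac1k}M_f^{\frac{1}{k\alpha}}r^2 .\]
Thus on $[0,R^*)$ the solution is monotone and bounded, so $v(R^*):=\lim_{r\to R^*}v(r)$ exists and is finite. Likewise, $v'$ extends continuously to $R^*$, because the integral defining it converges.

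To extend the solution, consider the problem on $[R^*-\delta, R^*+\delta]$ in integral form:
\[v(r)=v(R^*)+\int_{R^*}^r F(s,v)\,{\rm d}s ,\]
where $F$ contains the already known integral over $[0,R^*]$ plus the new piece. For $s$ bounded away from $0$, $F$ is a continuous functional of $v$, with $r^{k-n}$ smooth and the inner integral strictly positive. It is also Lipschitz in $v$ in the sup norm, because $v\ge1$ and $t\mapsto t^{-\alpha'}$ is Lipschitz on $[1,\infty)$, and $x\mapsto x^{1/k}$ is Lipschitz away from $0$. The inner integral is at least $c(R^*)>0$. So either a Picard contraction or the Euler broken line construction of Lemma \ref{v local}, repeated verbatim on a short interval $[R^*,R^*+\delta)$ with the set $\mathscr R$ adapted (initial value $v(R^*)$, increasing, bounded by $v(R^*)+h$), produces a continuation past $R^*$. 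By Lemma \ref{v C2}-type reasoning, the glued function is $C^2$ at $R^*$, since the ODE formula gives $v''$ continuously for $r>0$. This contradicts the maximality of $R^*$, so $R^*=\infty$. Alternatively, one can avoid the maximal-interval argument: apply Euler's broken lines directly on $[0,N]$ for each $N$, using the uniform bounds $F\le (C_n^k)^{-1/k}M_f^{1/(k\alpha)}N$ for equicontinuity, and then take a diagonal subsequence as $N\to\infty$.

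The main obstacle is the step where the solution is continued past $R^*$. One must check that the nonlocal term (the integral from $0$) is handled correctly, and that the Lipschitz or equicontinuity estimates in the broken-line construction depend only on bounds available at $R^*$. These are: the upper bound on $v$ from the a priori estimate, $v\ge1$, and positivity of the inner integral. Positivity of the inner integral is what prevents the degeneracy of the $k$-th root that occurs at $r=0$.
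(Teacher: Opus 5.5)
Your proposal is correct and follows the paper's route. The key step is the same a priori bound $v'(r)\le (C_n^k)^{-1/k}M_f^{1/(k\alpha)}r$, hence $1\le v\le 1+Cr^2$ on any interval of existence, which uses only $v\ge1$, $\alpha>0$ and $f\le M_f$; this is followed by continuation, which the paper simply attributes to standard ODE theory. Your additional care with the nonlocal term, including the positivity of the inner integral away from $r=0$, fills in what the paper leaves implicit. The gluing of solutions up to $R^*$ is justified by the same Lipschitz estimate that yields uniqueness in Lemma \ref{v unique}, or it can be bypassed by your diagonal-subsequence variant.
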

\begin{proof}
The core part of the proof is illustrating that the local solution to \eqref{v fangcheng} is locally bounded on $[0,\infty)$. Let $R>0$ and $v_{\text{loc}}\in C^2([0,R))$ are as given in Lemma \ref{v local}. By \eqref{v fangcheng}, we have $v_{\text{loc}}'\geq0$ and then for any $r\in[0,R)$, $v_{\text{loc}}(r)\geq v_{\text{loc}}(0)=1$. Therefore, the following estimate can be obtained:
\begin{equation}
(v_{\text{loc}}'(r))^k=\frac{nr^{k-n}}{C_n^k}\int_0^rs^{n-1}\left(\frac{f(s)}{v_{\text{loc}}(s)}\right)^{\alpha'}\,{\rm d}s\leq\frac{M_f^{\alpha'}}{C_n^k}r^{k},\notag
\end{equation} 
and then,
\[v_{\text{loc}}'(r)\leq\left(\frac{M_f^{\alpha'}}{C_n^k}\right)^{\frac{1}{k}}r=:2C_3r,\]
where $C_3=C(n,k,\alpha,f)$ is independent with $r$ or $R$. For any $r\in[0,R)$, integrating from $0$ to $r$ and using $v(0)=1$, we have
\begin{equation}\label{v fangcheng1}
1\leq v_{\text{loc}}(r)\leq1+C_3r^2<1+C_3R^2,\quad 0\leq r<R,
\end{equation}
which means the solution to \eqref{v fangcheng} is locally bounded.\par
Eventually, the local solution to \eqref{v fangcheng} on $[0,R)$ can be extended to $[0,\infty)$ according to the knowledge of ordinary differential equations, otherwise there will be a contradiction with \eqref{v fangcheng1}.
\end{proof}
\begin{lemma}[Uniqueness]\label{v unique}
The solution to \eqref{v fangcheng} given by Lemma \ref{v entire} is unique on $C^2([0,\infty))$.
\end{lemma}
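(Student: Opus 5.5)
We will prove uniqueness by a comparison argument that exploits the monotonicity of the right-hand side of \eqref{v fangcheng} in $v$ when $\alpha>0$. Suppose $v_1,v_2\in C^2([0,\infty))$ both solve \eqref{v fangcheng}. Both are nondecreasing with $v_i\geq 1$, so each $(f/v_i)^{\alpha'}$ is well defined and continuous. Let
\[
r^*\coloneqq\sup\{\rho\geq0:\ v_1=v_2 \text{ on } [0,\rho]\}.
\]
If $r^*=\infty$ we are done. Otherwise, by continuity $v_1=v_2$ on $[0,r^*]$, and we aim for a contradiction on a short interval $[r^*,r^*+\delta]$. The case $r^*=0$ needs care because of the singular weight $r^{k-n}$. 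So instead of working with $v'$ directly, we use the integrated form
\[
v_i(r)=1+\int_0^r F(s,v_i)\,{\rm d}s .
\]

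\textbf{Key estimate.} The map $x\mapsto x^{-\alpha'}$ is Lipschitz on $[1,\infty)$ with constant $\alpha'$, and $f\leq M_f$. Hence
\[
\left|\left(\frac{f}{v_1}\right)^{\alpha'}-\left(\frac{f}{v_2}\right)^{\alpha'}\right|\leq \alpha' M_f^{\alpha'}|v_1-v_2| .
\]
Set $D(r)\coloneqq\sup_{[0,r]}|v_1-v_2|$. Then
\[
\left|r^{n-k}(v_1')^k-r^{n-k}(v_2')^k\right|\leq \frac{n\alpha' M_f^{\alpha'}}{C_n^k}\int_0^r s^{n-1}|v_1-v_2|\,{\rm d}s\leq \frac{\alpha' M_f^{\alpha'}}{C_n^k}\,r^n D(r).
\]
We must pass from $k$-th powers back to $v'$. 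The inequality $|x-y|\leq|x^k-y^k|^{1/k}$ used in Lemma \ref{v local} only gives a H\"older bound, which is not enough for a Gronwall argument. We therefore split into two regimes.

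\emph{Regime $r^*>0$.} Near $r^*$ we have $v_i'(r)\geq c>0$, because $F(r,v)>0$ for $r>0$ and is continuous. Then
\[
|x-y|\leq \frac{|x^k-y^k|}{k\,c^{k-1}},
\]
so $|v_1'-v_2'|\leq C D(r)$ on $[r^*,r^*+\delta]$. Integrating gives $D(r^*+\delta)\leq C\delta\, D(r^*+\delta)$, which forces $D\equiv0$ for $\delta$ small.

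\emph{Regime $r^*=0$ (the main obstacle).} Here we use the lower bound on the integrand: $(f/v_i)^{\alpha'}\geq c_0>0$ near $0$, since $f(0)>0$ and $v_i$ is bounded near $0$. This gives $(v_i')^k\geq c_0 r^k/C_n^k$, i.e.\ $v_i'\geq c r$. Consequently
\[
|v_1'-v_2'|\leq \frac{|(v_1')^k-(v_2')^k|}{k(cr)^{k-1}}\leq \frac{C r^{k}D(r)}{r^{k-1}}=CrD(r).
\]
Integrating over $[0,\delta]$ gives $D(\delta)\leq C\delta^2D(\delta)$, again forcing $D\equiv0$ near $0$. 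This contradicts the definition of $r^*$ in either case.

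Alternatively, one could attempt a pure ordering argument. Since $F(r,\cdot)$ is decreasing for $\alpha>0$, a hypothetical crossing point could be compared. However, that route is less clean than the Lipschitz/Gronwall argument above, which is the one I will carry out.
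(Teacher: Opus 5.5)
Your argument is correct and uses the same mechanism as the paper's proof. A lower bound of order $r$ on $F(r,v)$, which comes from $f>0$ and an upper bound on $v$, turns the $O(r^kD(r))$ bound on the difference of $k$-th powers into $|F(r,v_1)-F(r,v_2)|\le C\,r\,D(r)$, and a Gronwall-type argument then closes it. The differences are only organizational: the paper linearizes through the interpolation $h(\lambda)$ rather than the mean value theorem for $x\mapsto x^k$, and it works on all of $[0,R)$ at once, with constants depending on $R$ through the lower bound $m(R)$ of $f$ and $v\le 1+C_3R^2$. So your separate regime $r^*>0$ is unnecessary, since your $r^*=0$ estimate already holds on every bounded interval.
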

\begin{proof}
For any given $R>0$, define $F(r,v)$ as in the proof of Lemma \ref{v local} and consider about the following problem:
\begin{equation} \label{v fangcheng2}
\left\{ \begin{aligned}v'(r)&=F(r,v),\quad 0<r<R,\\
v(0)&=1.\end{aligned} \right.
\end{equation}
Suppose it has two solutions on $[0,R)$, denoted by $v$ and $\tilde{v}$. For any $r\in[0,R)$, define a map from $[0,1]$ to $\mathbb{R}$ as
\begin{equation}
h(\lambda)\coloneqq\left(\frac{nr^{k-n}}{C_n^k} \int_0^r s^{n-1}\left(\frac{f(s)}{v(s)+\lambda(\tilde{v}(s)-v(s))}\right)^{\alpha'}\,{\rm d}s\right)^{\frac{1}{k}},\quad\lambda\in[0,1],\notag
\end{equation}
then we have $h(0)=F(r,v)$, $h(1)=F(r,\tilde{v})$, and
\begin{equation}
h'(\lambda)=-\frac{\alpha'}{k}\left(\frac{nr^{k-n}}{C_n^k}\right)^{\frac{1}{k}}
\left(\int_0^r \frac{s^{n-1}(f(s))^{\alpha'}}{(v(s)+\lambda(\tilde{v}(s)-v(s)))^{\alpha'}}\,{\rm d}s\right)^{\frac{1}{k}-1}
\int_0^r\frac{s^{n-1}(f(s))^{\alpha'}(\tilde{v}(s)-v(s))}{(v(s)+\lambda(\tilde{v}(s)-v(s)))^{\alpha'+1}}\,{\rm d}s.\notag
\end{equation}
Applying the Newton-Leibnitz formula of $h(\lambda)$ on $[0,1]$, we have
\begin{align}
|F(r,\tilde{v})-F(r,v)|&=|h(1)-h(0)|=\left|\int_0^1h'(\lambda)\,{\rm d}\lambda\right|\notag\\
&\leq\frac{\alpha'}{k}\left(\frac{nr^{k-n}}{C_n^k}\right)^{\frac{1}{k}}\int_0^1  \left(\left(\int_0^r \frac{s^{n-1}f^{\alpha'}}{(v+\lambda(\tilde{v}-v))^{\alpha'}}\,{\rm d}s\right)^{\frac{1}{k}-1}
\int_0^r\frac{s^{n-1}f^{\alpha'}(\tilde{v}-v)}{(v+\lambda(\tilde{v}-v))^{\alpha'+1}}\,{\rm d}s\right)\,{\rm d}\lambda.\label{v fangcheng3}
\end{align}
Samely with the deducing of \eqref{v fangcheng1}, we have $1\leq v(r)\leq1+C_3R^2$ and $1\leq\tilde{v}(r)\leq1+C_3R^2$. Therefore, for any $\lambda\in[0,1]$, we have
\begin{equation}
1\leq v(r)+\lambda(\tilde{v}(r)-v(r))\leq1+C_3R^2.\notag
\end{equation}
Then combining with \eqref{v fangcheng3}, we have
\begin{align}
|F(r,\tilde{v})-F(r,v)|&\leq\frac{\alpha'}{k}\left(\frac{nr^{k-n}}{C_n^k}\right)^{\frac{1}{k}}
\int_0^1  \left(\left(\int_0^r \frac{s^{n-1}f^{\alpha'}}{(1+C_3R^2)^{\alpha'}}\,{\rm d}s\right)^{\frac{1}{k}-1}
\int_0^rs^{n-1}f^{\alpha'}(\tilde{v}-v)\,{\rm d}s\right)\,{\rm d}\lambda \notag\\
&\leq\frac{\alpha'M_f}{k}\left(\frac{n}{C_n^k}\right)^{\frac{1}{k}}\left(\frac{1+C_3R^2}{m(R)}\right)^{\alpha'(1-\frac{1}{k})}r^{\frac{k-n}{k}} ||\tilde{v}-v||_{C^0([0,r])}\int_0^1  \left( \int_0^rs^{n-1}\,{\rm d}s\right)^{\frac{1}{k}-1}\left(\int_0^r s^{n-1}\,{\rm d}s\right)\,{\rm d}\lambda\notag\\
&=\leq\frac{\alpha'M_f}{k(C_n^k)^{\frac{1}{k}}}\left(\frac{1+C_3R^2}{m(R)}\right)^{\alpha'(1-\frac{1}{k})}r||\tilde{v}-v||_{C^0([0,r])}=:C_4r||\tilde{v}-v||_{C^0([0,r])},\label{v fangcheng4}
\end{align}
where $C_4=C(n,k,\alpha,f,R)$ is independent with $r$.
\par
Integrating \eqref{v fangcheng2} for both $v$ and $\tilde{v}$, and using the initial value, we have
\[v(r)=1+\int_0^rF(s,v)\,{\rm d}s,\quad \tilde{v}(r)=1+\int_0^rF(s,\tilde{v})\,{\rm d}s,\]
and then,
\begin{equation}\label{v fangcheng5}
|\tilde{v}(r)-v(r)|=|(\tilde{v}(r)-1)-(v(r)-1)|\leq\int_0^r|F(s,\tilde{v})-F(s,v)|\,{\rm d}s.
\end{equation} 
Using the estimate \eqref{v fangcheng4} and \eqref{v fangcheng5}, we have
\begin{equation}
|\tilde{v}(r)-v(r)|\leq C_4\int_0^r s||\tilde{v}-v||_{C^0([0,s])}\,{\rm d}s,\notag
\end{equation}
then,
\begin{equation}
||\tilde{v}-v||_{C^0([0,r])}=\max\limits_{0\leq \tau \leq r}|\tilde{v}(\tau)-v(\tau)|\leq C_4\max\limits_{0\leq \tau \leq r} \int_0^{\tau} s||\tilde{v}-v||_{C^0([0,s])}\,{\rm d}s = C_4 \int_0^r s||\tilde{v}-v||_{C^0([0,s])}\,{\rm d}s.\label{v fangcheng6}
\end{equation}
\par
For any $r\in[0,R)$, define 
\[I(r)\coloneqq \int_0^rs||\tilde{v}-v||_{C^0([0,s])}\,{\rm d}s,\]
then $I(0)=0$, $I(r)\geq 0$ and $I$ is monotone increasing. By \eqref{v fangcheng6}, we have
\begin{equation}
I'(r)=r||\tilde{v}-v||_{C^0([0,r])}\leq C_4r\cdot I(r)\leq C_4R\cdot I(r).\notag
\end{equation}
Then, it is easy to verify that
\begin{equation}
\left({\rm e}^{-C_1Rr}I(r)\right)'={\rm e}^{-C_1Rr}(I'(r)-C_1R\cdot I(r))\leq 0,\notag
\end{equation}
which implies for any $r\in[0,R)$,
\begin{equation}
0\leq {\rm e}^{-C_1Rr}I(r) \leq I(0)=0,\notag
\end{equation}
thus $I(r)\equiv 0$. By the definition of $I$, we have $||\tilde{v}-v||_{C^0([0,R))}=0$, and then $v\equiv\tilde{v}$ on $[0,R)$, which means the entire solution to \eqref{v fangcheng} is unique on $[0,R)$. Therefore, the uniqueness is trivial with the arbitrariness of $R>0$ and Lemma \ref{v C2}.
\end{proof}

\subsection{Proof of Theorem \ref{the1}}
Recalling \eqref{two4}, Lemma \ref{v C2}, Lemma \ref{v entire} and Lemma \ref{v unique}, we know that the parabolically $k-$convexity and the regularity of $u$ are the only work left to be completed for the proof of Theorem \ref{the1}.
\begin{proof}[Proof of existence]
We set $\alpha>0$. By \eqref{two4}, we can compute that for any $t\leq 0$,
\begin{equation}\label{pkc t}
w'(t)=-(k\alpha+1)^{\frac{1}{k\alpha+1}-1}\left(1+\int_t^0g(s)\,{\rm d}s\right)^{\frac{1}{k\alpha+1}-1}g(t)<0.\end{equation}
For $r=0$, we know from \eqref{two1} and the proof of Lemma \ref{v C2} that
\begin{equation}\sigma_j(\lambda(D^2u))=C_n^jw^j(t)\left(\frac{(f(0))^{\frac{1}{\alpha}}}{C_n^k}\right)^{\frac{j}{k}}>0,\quad 1\leq j\leq k.\label{pkc r1}\end{equation}
For any $r>0$, we define a positive function of $r$ as
 \[J(r)\coloneqq\frac{v'(r)}{r^{1-\frac{n}{k}}},\]
and then by \eqref{v fangcheng}, we have
\[(J(r))^k=\frac{n}{C_n^k}\int_0^r s^{n-1}\left(\frac{f(s)}{v(s)}\right)^{\frac{1}{\alpha}}\,{\rm d}s.\]
Differentiating the above equation, we obtain that
\[J'(r)=\frac{n}{kC_n^k}\frac{r^{n-1}}{(J(r))^{k-1}}\left(\frac{f(r)}{v(r)}\right)^{\frac{1}{\alpha}}>0.\]
Together with a direct computation about $J'(r)$, it is clear that \(v''(r)>(1-n/k)v'(r)/r\). Then for any $1\leq j\leq k$, \eqref{one1} implies
\begin{align}\notag
	\sigma_j(\lambda(D^2u))&>w^j(t) \left({C_{n-1}^{j-1}}(1-\frac{n}{k})\left(\frac{v'(r)}{r}\right)^j+{C_{n-1}^{j}}\left(\frac{v'(r)}{r}\right)^j\right)\\ &=C_{n-1}^{j-1}w^j(t)\left(\frac{v'(r)}{r}\right)^j(\frac{k-n}{k}+\frac{n-j}{j})\notag\\
&\geq 0.\label{pkc r2}
\end{align}
Summarizing \eqref{pkc t}, \eqref{pkc r1} and \eqref{pkc r2}, we obtain the parabolically $k-$convexity of the solution we constructed.
\par
By \eqref{two4}, we know that $w(t)$ is $C^1((-\infty,0])$. With this and Lemma \ref{v C2}, we prove the regularity of $u$ and know that it is the classical solution of \eqref{one1}. Then, we finish the proof of the existence part of Theorem \ref{the1}.
\end{proof}
If we want to improve the regularity of $u(x,t)$, the regularity of $f$ and $g$ should also be promoted by assumption. For any $\gamma\geq1$, by \ref{two4}, we know 
\begin{equation}\label{w' biaodashi}
u_t(x,t)=w'(t)v(|x|)=\frac{-g(t)}{(k\alpha+1)\left(1+\int_t^0g(s)\,{\rm d}s\right)}u(x,t),
\end{equation}
and once $g\in C^\gamma((-\infty,0])$, $w\in C^{\gamma+1}((-\infty,0])$. By \eqref{w' biaodashi}, for any given $t<0$, $u(\cdot,t)$ is a classical solution to
\[\left(\sigma_k^{\frac{1}{k}}(\lambda(D^2u))\right)(\cdot,t)=\left(\frac{fg}{-u_t}\right)^{\frac{1}{k\alpha}}(\cdot,t)=\left((1+k\alpha)(1+\int_t^0g(s)\,{\rm d}s)\right)^{\frac{1}{k\alpha}}\left(\frac{f(|\cdot|)}{u(\cdot,t)}\right)^{\frac{1}{k\alpha}}=:\mathscr{F}(\cdot,t).\]
Therefore by Evans-Krylov theory and relevant regularity theory for $k-$Hessian equation that $u(\cdot,t)\in C^{\gamma}(B_1)$, once $f\in C^\gamma(0,\infty)$, $\mathscr{F}(\cdot,t)\in C^\gamma(B_1)$ can be deduced with $u>1$, and then so is $u(\cdot,t)$. When $r\geq1$, we can also have a similar induction result by \eqref{v fangcheng}.
\begin{proof}[Proof of nonexistence]
We set $-1/k<\alpha<0$. By \eqref{v fangcheng}, we know for any $r>0$,
\[v'(r)=\left(\frac{nr^{k-n}}{C_n^k}\int_0^rs^{n-1}\left(\frac{f(s)}{v(s)}\right)^{\frac{1}{\alpha}}\,{\rm d}s\right)^{\frac{1}{k}}=\left(\frac{n}{C_n^k}\right)^{\frac{1}{k}}r^{-(\frac{n}{k}-1)}\left(\int_0^r(f(s))^{\alpha'}s^{n-1}(v(s))^{-\alpha'}\,{\rm d}s\right)^{\frac{1}{k}}.\]
And using (H2), we have
\[M_f^{\alpha'}\int_0^r s^{n-1}(v(s))^{-\alpha'}\,{\rm d}s\leq\int_0^r(f(s))^{\alpha'}s^{n-1}(v(s))^{-\alpha'}\,{\rm d}s\leq m_f^{\alpha'}\int_0^r s^{n-1}(v(s))^{-\alpha'}\,{\rm d}s,\]
and therefore,
\begin{equation}\label{jiabi nonexistence}\left(\frac{nM_f^{\alpha'}}{C_n^k}\right)^{\frac{1}{k}}r^{-(\frac{n}{k}-1)}\left(\int_0^r s^{n-1}(v(s))^{-\alpha'}\,{\rm d}s\right)^{\frac{1}{k}}\leq v'(r)\leq\left(\frac{nm_f^{\alpha'}}{C_n^k}\right)^{\frac{1}{k}}r^{-(\frac{n}{k}-1)}\left(\int_0^r s^{n-1}(v(s))^{-\alpha'}\,{\rm d}s\right)^{\frac{1}{k}}.\end{equation}
\par
When $-1/k<\alpha<0$, $\frac{1}{k+1}(\frac{1}{\alpha}-1)<-1$, and then for an arbitrary $c>0$, we have
\[\int_c^{+\infty}\left(\int_0^t s^{-\frac{1}{\alpha}}\,{\rm d}s\right)^{-\frac{1}{k+1}}\,{\rm d}t=\left(1-\frac{1}{\alpha}\right)^{\frac{1}{k+1}}\int_c^{+\infty}t^{\frac{1}{k+1}(\frac{1}{\alpha}-1)} \,{\rm d}t<+\infty.\]
We apply Lemma 2.1 of \cite{CJ} on the following two initial value problems of unknown function $v$:
\begin{equation}\notag
\left\{ \begin{aligned}
&v'(r)=\left(\frac{nm_f^{\alpha'}}{C_n^k}\right)^{\frac{1}{k}}r^{-(\frac{n}{k}-1)}\left(\int_0^r s^{n-1}(v(s))^{-\alpha'}\,{\rm d}s\right)^{\frac{1}{k}},\quad r>0, \\
&v(0)=1, \end{aligned}\right.
\end{equation}
and
\begin{equation}\notag
\left\{ \begin{aligned}
&v'(r)=\left(\frac{nM_f^{\alpha'}}{C_n^k}\right)^{\frac{1}{k}}r^{-(\frac{n}{k}-1)}\left(\int_0^r s^{n-1}(v(s))^{-\alpha'}\,{\rm d}s\right)^{\frac{1}{k}},\quad r>0, \\
&v(0)=1.\end{aligned}\right.
\end{equation}
There is no solution $v$ to both of the above problem satisfying $v\in C^1([0,+\infty))\cap C^2(0,+\infty)$ with $v(0)=1$, $v'(0)=0$, $v'(r)>0$ and $v''(r)>0$ for any $r>0$. By \eqref{jiabi nonexistence} and comparison theorem for first-order ODEs, we can prove the nonexistence of the entire solution of the form \eqref{one2} to \eqref{one1}.
\end{proof}

\section{The asymptotic behavior of $u$ given in Theorem \ref{the1}}

In this section, we will obtain the asymptotic behavior of $u(x,t)$ given in Theorem \ref{the1} at infinity, prove Proposition \ref{pro1} and Theorem \ref{the2}. We also set $\alpha>0$ and denote $\alpha'=1/\alpha$ in this section. And (H2) gives the upper and lower boundness of $f$ and $g$ on $\mathbb{R}$.
\begin{proof}[Proof of Proposition \ref{pro1}]
By \eqref{v fangcheng} and the increasing proposition of $v$, we have the following estimate:
\[(v'(r))^k\geq\frac{nr^{k-n}}{C_n^k}\int_0^r s^{n-1}\left(\frac{f(s)}{v(r)}\right)^{\alpha'}\,{\rm d}s\geq\frac{r^{k-n}}{C_n^k}\left(\frac{m_f}{v(r)}\right)^{\alpha'}\int_0^rns^{n-1}\,{\rm d}s=\frac{m_f^{\alpha'}}{C_n^k(v(r))^{\alpha'}}r^k.\]
The above estimate implies that
\begin{equation}\label{v esti 1}
(v(r))^{\frac{\alpha'}{k}}v'(r)\geq\left(\frac{m_f^{\alpha'}}{C_n^k}\right)^{\frac{1}{k}}r.
\end{equation}
Noticing that $\left(v^{\frac{k+\alpha'}{k}}(r)\right)'=\frac{k+\alpha'}{k}(v(r))^{\frac{\alpha'}{k}}v'(r)$, we can deduce from \eqref{v esti 1} that
\[\left(v^{\frac{k+\alpha'}{k}}(r)\right)'\geq\frac{k+\alpha'}{k}\left(\frac{m_f^{\alpha'}}{C_n^k}\right)^{\frac{1}{k}}r.\]
After integrating the above from $0$ to $r$, we have for any $r\geq0$,
\[v^{\frac{k+\alpha'}{k}}(r)\geq1+\frac{k+\alpha'}{2k}\left(\frac{m_f^{\alpha'}}{C_n^k}\right)^{\frac{1}{k}}r^2\geq\frac{k+\alpha'}{2k}\left(\frac{m_f^{\alpha'}}{C_n^k}\right)^{\frac{1}{k}}r^2,\]
and then,
\begin{equation}\label{v esti 2}
v(r)\geq\left(\frac{k+\alpha'}{2k}\right)^{\frac{k}{k+\alpha'}}\left(\frac{m_f^{\alpha'}}{C_n^k}\right)^{\frac{1}{k+\alpha'}}r^{\frac{2k}{k+\alpha'}}=:\Lambda_1r^{\frac{2k\alpha}{k\alpha+1}},
\end{equation}
where $\Lambda_1=C(n,k,\alpha,f)$.
\par
Using \eqref{v esti 2} to estimate \eqref{v fangcheng} from above, we have
\begin{align}
(v'(r))^k&\leq\frac{nr^{k-n}}{C_n^k}\int_0^r s^{n-1}\left(\frac{f(s)}{\Lambda_1s^{\frac{2k\alpha}{k\alpha+1}}}\right)^{\alpha'}\,{\rm d}s\notag\\
&\leq\frac{nr^{k-n}}{C_n^k}\left(\frac{M_f}{\Lambda_1}\right)^{\alpha'}\int_0^r s^{n-\frac{2k}{k\alpha+1}-1}\,{\rm d}s\notag\\
&=\frac{n}{(n-\frac{2k}{k\alpha+1})C_n^k}\left(\frac{M_f}{\Lambda_1}\right)^{\alpha'}r^{\frac{k(k\alpha-1)}{k\alpha+1}},\notag
\end{align}
here we need the assumption that $\alpha>\frac{2k-n}{kn}$. The above estimate implies that
\[v'(r)\leq\left(\frac{n}{(n-\frac{2k}{k\alpha+1})C_n^k}\right)^{\frac{1}{k}}\left(\frac{M_f}{\Lambda_1}\right)^{\frac{1}{k\alpha}}r^{\frac{k\alpha-1}{k\alpha+1}}.\]
After integrating the above from $0$ to $r$, we have for any $r\geq0$,
\begin{equation}\label{v esti 3}
v(r)\leq1+\frac{k\alpha+1}{2k\alpha}\left(\frac{n}{(n-\frac{2k}{k\alpha+1})C_n^k}\right)^{\frac{1}{k}}\left(\frac{M_f}{\Lambda_1}\right)^{\frac{1}{k\alpha}}r^{\frac{2k\alpha}{k\alpha+1}}=:1+\Lambda_2r^{\frac{2k\alpha}{k\alpha+1}},
\end{equation}
where $\Lambda_2=C(n,k,\alpha,f)$. Combining \eqref{v esti 2} and \eqref{v esti 3}, we set $v(0)=1$ and then have
\[\Lambda_1r^{\frac{2k\alpha}{k\alpha+1}}\leq v(r)\leq 1+\Lambda_2 r^{\frac{2k\alpha}{k\alpha+1}},\quad r\geq0.\]
\par
By \eqref{v fangcheng} and \eqref{two4}, we have $w(t)$ is decreasing on $(-\infty,0]$ with $w(0)=(k\alpha+1)^{1/(k\alpha+1)}$ and $v(r)$ is increasing on $[0,\infty)$ with $v(0)=1$.
\par
On the one hand, we have
\begin{equation}\label{t to infty}
u(x,t)\geq w(t)v(0)=(k\alpha+1)^{\frac{1}{k\alpha+1}}\left(1+\int_t^0g(s)\,{\rm d}s\right)^{\frac{1}{k\alpha+1}}\geq (k\alpha+1)^{\frac{1}{k\alpha+1}}\left(1-m_gt\right)^{\frac{1}{k\alpha+1}},
\end{equation}
and
\begin{equation}\label{r to infty}
u(x,t)\geq w(0)v(|x|)=(k\alpha+1)^{\frac{1}{k\alpha+1}}v(|x|)\geq\Lambda_1(k\alpha+1)^{\frac{1}{k\alpha+1}}|x|^{\frac{2k\alpha}{k\alpha+1}},
\end{equation}
where the last inequality is given by Propostion \ref{pro1}. As $-t+|x|^2\to\infty$, it can be deduced by combining \eqref{t to infty} with \eqref{r to infty} that
\begin{equation}
u(x,t)\geq\left\{ \begin{aligned}
&(k\alpha+1)^{\frac{1}{k\alpha+1}}\left(1+m_gt\right)^{\frac{1}{k\alpha+1}}&\to\infty,\quad&\text{as }-t\to\infty,\\
&\Lambda_1(k\alpha+1)^{\frac{1}{k\alpha+1}}|x|^{\frac{2k\alpha}{k\alpha+1}}&\to\infty,\quad&\text{as }|x|\to\infty,
\end{aligned}\right.\notag
\end{equation}
thus $u(x,t)\to\infty$.
\par
On the other hand, as $u(x,t)\to\infty$, we prove $-t+|x|^2\to\infty$ by contradiction. We assume that there exists a sequence $\{(x^{(i)},t^{(i)})\}_{i\in\mathbb{N}}\subset\mathbb{R}^{n+1}_-$ such that $u(x^{(i)},t^{(i)})\to\infty$ as $i\to\infty$ but $-t^{(i)}+|x^{(i)}|^2\leq M$ for some constant $M>0$ and any $i\in\mathbb{N}$. By the monotonicity of $w$ and $v$, we have for any $i\in\mathbb{N}$, $u((x^{(i)},t^{(i)}))\leq w(-M)v(M^{\frac{1}{2}})$, which makes a contradiction to the assumption.
\end{proof}
To prove Theorem \ref{the2}, we further strengthen the assumptions, namely, set $f$ and $g$ are periodic functions and satisfy \eqref{f and g period}, \(\alpha>\max\{0, \frac{2k-n}{kn}\}\). Denote $u(x,t)=w(t)v(r)$ is the solution given by Theorem \ref{the1} until the end of this section. Denote $\beta\coloneqq (2k\alpha)/(k\alpha+1)$ and $p\coloneqq n-\alpha'\beta$. Then we can compute by using $\alpha>\frac{2k-n}{kn}$,
\begin{equation}\label{alpha'beta}
p=n-\frac{2k}{k\alpha+1}>0.
\end{equation}
\begin{lemma}\label{yz bound}
The following two functions defining on $[0,\infty)$:
\[y(r):=r^{-\beta}v(r), \quad  z(r):=r^{1-\beta}v'(r),\]
are both bounded on $[0,\infty)$ for some $R_0>0$. The lower and upper bounds of $y$ and $z$ are all only depending on $n,k,\alpha$ and $f$.
\end{lemma}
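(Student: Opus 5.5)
The plan is to read both bounds off the estimates already obtained in the proof of Proposition \ref{pro1}, working on $[R_0,\infty)$ with $R_0=1$. On this range both functions are bounded above and below by positive constants. Near $r=0$ the function $y$ necessarily blows up, since $v(0)=1$, so the statement is understood away from the origin.

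\emph{Bounds for $y$.} By Proposition \ref{pro1},
\[
\Lambda_1\le r^{-\beta}v(r)=y(r)\le r^{-\beta}+\Lambda_2 .
\]
For $r\ge R_0=1$ this gives $\Lambda_1\le y(r)\le 1+\Lambda_2$. Both $\Lambda_1$ and $\Lambda_2$ depend only on $n,k,\alpha$ and $f$ (through $m_f$ and $M_f$).

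\emph{Upper bound for $z$.} In the proof of Proposition \ref{pro1}, the lower bound \eqref{v esti 2} was inserted into \eqref{v fangcheng}. This is exactly where $p>0$ from \eqref{alpha'beta} is needed. It gives
\[
v'(r)\le K\,r^{\frac{k\alpha-1}{k\alpha+1}},\qquad K=\left(\frac{n}{p\,C_n^k}\right)^{\frac1k}\left(\frac{M_f}{\Lambda_1}\right)^{\frac{1}{k\alpha}} .
\]
Since $\frac{k\alpha-1}{k\alpha+1}=\beta-1$, this says $z(r)\le K$ for every $r>0$. Near $0$ we also have $z(r)\to0$, because $v'(r)\sim v''(0)r$ and $\beta<2$. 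Hence $z$ is in fact bounded on all of $[0,\infty)$.

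\emph{Lower bound for $z$.} Start from \eqref{v esti 1}, which gives $v'(r)\ge c\,r\,v(r)^{-\alpha'/k}$ with $c=(m_f^{\alpha'}/C_n^k)^{1/k}$. For $r\ge1$ use the upper bound $v(r)\le(1+\Lambda_2)r^{\beta}$ to get
\[
v'(r)\ge c(1+\Lambda_2)^{-\frac{\alpha'}{k}}\,r^{1-\frac{\alpha'\beta}{k}} .
\]
A direct computation gives $1-\frac{\alpha'\beta}{k}=1-\frac{2}{k\alpha+1}=\beta-1$. Therefore $z(r)\ge c(1+\Lambda_2)^{-\alpha'/k}>0$ for $r\ge1$.

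There is no real obstacle; the only care needed is bookkeeping. One must check the exponent identities $\beta-1=\frac{k\alpha-1}{k\alpha+1}$ and $1-\alpha'\beta/k=\beta-1$. One must also make sure the integral $\int_0^r s^{p-1}\,ds$ converges, which is where $\alpha>\frac{2k-n}{kn}$ enters through \eqref{alpha'beta}. Finally, all constants must depend only on $n,k,\alpha,m_f,M_f$.
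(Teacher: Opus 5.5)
Your proposal is correct and follows essentially the paper's route. The bounds on $y$ come straight from Proposition~\ref{pro1}. The bounds on $z$ come from inserting $m_f\le f\le M_f$ and the two-sided bounds on $y$ (equivalently on $v$) into \eqref{v fangcheng}, which the paper does in the rescaled form \eqref{yz fangcheng} using $p>0$. Your lower bound for $z$ via \eqref{v esti 1} and your reading of the statement as boundedness on $[R_0,\infty)$ with $R_0=1$ are both sound, and they simply make explicit what the paper's terse proof leaves implicit.
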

\begin{proof}
The lower and upper bounds of $y(r)$ follows directly from Proposition \ref{pro1}, where the bounds are depending on $n,k,\alpha$ and $f$.
\par
Substituting $v(r)=r^\beta y(r)$ and $v'(r)=r^{\beta-1} z(r)$ into \eqref{v fangcheng}, we obtain
\begin{equation}
(z(r))^k=\frac{n}{C_n^k}r^{-p} \int_0^r s^{p-1}(f(s))^{\alpha'}(y(s))^{-\alpha'}\,{\rm d}s. \label{yz fangcheng}
\end{equation}
Because of the boundedness of $f$ and $y$, \eqref{yz fangcheng} yields when $p>0$, $z(r)$ is bounded for all sufficiently large $r$.
\end{proof}
We define a positive constant as
\[\bar{f^{\alpha'}}:=\frac{1}{T_f}\int_0^{T_f}(f(s))^{\alpha'}\,{\rm d}s.\]
And we define for $r\geq R_0$, where $R_0$ is given in Lemma \ref{yz bound},
\[ J(r):=r^{-p}\int_{R_0}^r s^{p-1}(y(s))^{-\alpha'}\,{\rm d}s.\]
\begin{lemma}\label{3.2}
We have when $r\to\infty$,
\begin{equation}\label{z asym}
(z(r))^k =\frac{n\bar{f^{\alpha'}}}{C_n^k}J(r)+o(1).
\end{equation}
\end{lemma}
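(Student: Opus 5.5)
Starting from \eqref{yz fangcheng}, we first discard the contribution of $[0,R_0]$: since $p>0$ by \eqref{alpha'beta}, the quantity $r^{-p}\int_0^{R_0}s^{p-1}f^{\alpha'}y^{-\alpha'}\,{\rm d}s$ tends to $0$ as $r\to\infty$. It therefore suffices to show
\[
r^{-p}\int_{R_0}^r s^{p-1}\left((f(s))^{\alpha'}-\bar{f^{\alpha'}}\right)(y(s))^{-\alpha'}\,{\rm d}s\to0 .
\]
Set $\phi(s):=(f(s))^{\alpha'}-\bar{f^{\alpha'}}$. This function is continuous and $T_f$-periodic with zero mean, so its primitive $\Phi(s):=\int_0^s\phi(\tau)\,{\rm d}\tau$ is $T_f$-periodic and bounded, say $|\Phi|\le C_\Phi$.

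We then integrate by parts with $\phi=\Phi'$, writing $G(s):=s^{p-1}(y(s))^{-\alpha'}$:
\[
\int_{R_0}^r G\phi\,{\rm d}s=\Phi(r)G(r)-\Phi(R_0)G(R_0)-\int_{R_0}^r \Phi(s)G'(s)\,{\rm d}s .
\]
By Lemma \ref{yz bound}, $y$ is bounded above and below away from $0$ for $r\ge R_0$. Hence $|\Phi(r)G(r)|\le Cr^{p-1}$, which is $o(r^p)$, and the term at $R_0$ is a constant.

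For the remaining integral we compute
\[
G'(s)=(p-1)s^{p-2}y^{-\alpha'}-\alpha's^{p-1}y^{-\alpha'-1}y'(s).
\]
Since $v=r^\beta y$ and $v'=r^{\beta-1}z$, we have $y'(s)=s^{-1}\bigl(z(s)-\beta y(s)\bigr)$. Lemma \ref{yz bound} bounds both $y$ and $z$, so $|y'(s)|\le C/s$ and therefore $|G'(s)|\le Cs^{p-2}$. Consequently
\[
r^{-p}\int_{R_0}^r|\Phi G'|\,{\rm d}s\le Cr^{-p}\int_{R_0}^r s^{p-2}\,{\rm d}s .
\]
This tends to $0$ when $p>1$ (bound $\le Cr^{-1}$), when $p=1$ (bound $\le Cr^{-1}\log r$), and when $0<p<1$ (the integral converges, so the bound is $\le Cr^{-p}$).

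Combining the pieces gives $(z(r))^k=\frac{n}{C_n^k}\bigl(\bar{f^{\alpha'}}J(r)+o(1)\bigr)$, which is \eqref{z asym}.

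The main obstacle is controlling $y'$ well enough to make $G'$ integrable against the bounded periodic primitive. This rests on the upper bound for $z$ from Lemma \ref{yz bound}, which in turn needs $p>0$ and is where $\alpha>\frac{2k-n}{kn}$ enters. If that bound were only available for large $r$, I would enlarge $R_0$ accordingly, which changes $J$ only by an $O(r^{-p})=o(1)$ term.
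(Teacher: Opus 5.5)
Your proof is correct and follows the paper's argument essentially step for step. You split off $[0,R_0]$ as $O(r^{-p})$, integrate the zero-mean periodic part $f^{\alpha'}-\bar{f^{\alpha'}}$ by parts against its bounded primitive, control the derivative of $s^{p-1}y^{-\alpha'}$ via $ry'=z-\beta y$ and the bounds of Lemma \ref{yz bound}, and finish with the same case analysis ($p>1$, $p=1$, $0<p<1$); your boundary term $\Phi(r)G(r)$ is also the correct form of the paper's first term, which is misprinted there with $\tilde{f^{\alpha'}}(r)$ in place of the primitive (harmless, since both are bounded).
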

\begin{proof}
Firstly, we define for $r\geq0$,
\[ \tilde{f^{\alpha'}}(r):=(f(r))^{\alpha'}-\bar f^{\alpha'}.\]
By the definition of $\bar{f^{\alpha'}}$, we know that $\tilde{f^{\alpha'}}$ is also $T_f$-periodic, has zero mean, and $\tilde{F^{\alpha'}}(r):=\int_{R_0}^r \tilde{f^{\alpha'}}(s)\,{\rm d}s$ is bounded on $[R_0,\infty)$.
\par
Denote $h(r):=(y(r))^{-\alpha'}$ for $r\geq R_0$. By Lemma \ref{yz bound}, we have $|h(r)|$ is bounded on $[R_0,\infty)$. Once noticing that
\[ v'(r)=r^{\beta-1}\bigl(\beta y(r)+ry'(r)\bigr),\]
we have
\begin{equation}\label{yz huhuan} ry'(r)=z(r)-\beta y(r),\end{equation}
and hence $|ry'(r)|$ is also bounded. Therefore, $|rh'(r)|=\alpha'|y(r)|^{-\alpha'-1}|ry'(r)|$ can be also proved bounded.
By integration by parts, we have
\begin{align}
r^{-p}\int_{R_0}^r s^{p-1}\tilde{f^{\alpha'}}(s)h(s)\,{\rm d}s =&\, r^{-1}\tilde{f^{\alpha'}}(r)h(r)
-R_0^{p-1}r^{-p}\tilde{F^{\alpha'}}(R_0)h(R_0) \notag \\
& -(p-1)r^{-p}\int_{R_0}^r s^{p-2}\tilde{F^{\alpha'}}(s)h(s)\,{\rm d}s
-r^{-p}\int_{R_0}^r s^{p-2}\tilde{F^{\alpha'}}(s)(sh'(s))\,{\rm d}s.\label{right of yz estimate}
\end{align}
Using the boundedness of $\tilde{f^{\alpha'}}$, $\tilde{F^{\alpha'}}$, $|h|$, and $|rh'|$, we obtain from \eqref{right of yz estimate} that when $p>0$,
\[ r^{-p}\int_{R_0}^r s^{p-1}F(s)h(s)\,{\rm d}s=o(1),\quad r\to\infty.\]
Indeed, when $p\neq1$, the last two terms of the right-hand side of \eqref{right of yz estimate} are $O(r^{-1})$ if $p>1$
and $O(r^{-p})$ if $0<p<1$. When $p=1$, they are $O((\log r)/r)$.
\par
Since $r^{-p}\int_0^{R_0} s^{p-1}\tilde{f^{\alpha'}}(s)h(s)\,{\rm d}s$ is obviously of $O(r^{-p})$, \eqref{z asym} follows from \eqref{yz fangcheng} and the above estimate.
\end{proof}
By Lemma \ref{3.2} and \eqref{yz huhuan}, we actually have
\[ry'(r)+\beta y(r)=\kappa (J(r))^{\frac{1}{k}}+o(1),\quad r\to\infty,\]
where we denote $\kappa:=(n\bar{f^{\alpha'}}/C_n^k)^{1/k}$. And we can compute by differentiating the definition of $J(r)$ that
\[rJ'(r)=-pJ(r)+(y(r))^{-\alpha'}.\]
With $s=\ln r$, we define $y_e(s):=y(e^{s})$ and $J_e(s):=J(e^s)$. Then $y_e$ and $J_e$ satisfy the asymptotically autonomous system:
\begin{equation}\label{asym auto system}
\begin{cases}
\dot{y_e}=-\beta y_e+\kappa {J_e}^{\frac{1}{k}}+\varepsilon(s),\\
\dot{J_e}=-pJ_e+{y_e}^{-\alpha'},
\end{cases}
\end{equation}
where $\varepsilon(s)\to0$, as $s\to\infty$.
\par
We consider about the corresponding autonomous system in the below lemma previously, and prove Theorem \ref{the2} after.
\begin{lemma}\label{auto system behavior}
The autonomous system
\begin{equation}\label{auto system}
\begin{cases}
\dot{y_e}=-\beta y_e+\kappa {J_e}^{\frac{1}{k}},\\
\dot{J_e}=-pJ_e+{y_e}^{-\alpha'},
\end{cases}
\end{equation}
has a unique positive equilibrium:
\[ (y_*,J_*)=\left(C_v,\frac{C_v^{-\alpha'}}p\right),\]
where
\[C_v:=\left(\frac{\kappa^k}{p\beta^k}\right)^{\frac1{k+\alpha'}}=\left(\frac{k\alpha+1}{2k\alpha}\right)^{\frac{k\alpha}{k\alpha+1}}\left(\frac{n\bar{f^{\alpha'}}}{C_n^k(n-\frac{2k}{k\alpha+1})}\right)^{\frac{\alpha}{k\alpha+1}}.\]
Furthermore, every complete bounded trajectory of \eqref{auto system} in the positive quadrant is equal to $(y_*,J_*)$.
\end{lemma}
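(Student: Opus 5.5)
The plan has two parts: an algebraic computation of the equilibrium, and a phase-plane argument for the trajectories. The system \eqref{auto system} is planar and autonomous on the open quadrant $Q=\{y_e>0,J_e>0\}$. There it is $C^1$, because $J_e^{1/k}$ and $y_e^{-\alpha'}$ are smooth away from the axes. So the classical tools are available: Poincaré--Bendixson and the Dulac criterion.

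\emph{Equilibrium.} We set both right-hand sides to zero. This gives $\beta y=\kappa J^{1/k}$ and $pJ=y^{-\alpha'}$. Substituting $J=y^{-\alpha'}/p$ into the first relation and raising to the $k$-th power gives
\[
\beta^k y^k=\kappa^k y^{-\alpha'}/p, \qquad\text{i.e.}\qquad y^{k+\alpha'}=\kappa^k/(p\beta^k).
\]
Since $p>0$ by \eqref{alpha'beta}, this has exactly one positive root $y_*=C_v$, and then $J_*=C_v^{-\alpha'}/p$. The explicit formula for $C_v$ follows by inserting $\beta=2k\alpha/(k\alpha+1)$, $\kappa^k=n\bar{f^{\alpha'}}/C_n^k$ and $p=n-2k/(k\alpha+1)$, and using $1/(k+\alpha')=\alpha/(k\alpha+1)$.

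\emph{Linear stability.} The Jacobian at $(y_*,J_*)$ is
\[
\begin{pmatrix}-\beta & \tfrac{\kappa}{k}J_*^{1/k-1}\\ -\alpha' y_*^{-\alpha'-1} & -p\end{pmatrix}.
\]
Its trace is $-(\beta+p)<0$. Its determinant is $\beta p+\frac{\kappa\alpha'}{k}J_*^{1/k-1}y_*^{-\alpha'-1}>0$, since the off-diagonal entries have opposite signs. Hence the equilibrium is a hyperbolic sink.

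\emph{No periodic orbits.} The divergence of the vector field is $-\beta-p<0$ everywhere in $Q$. Bendixson's criterion (Dulac with weight $1$; $Q$ is simply connected) then rules out periodic orbits and homoclinic loops in $Q$. The same Green's-theorem argument also rules out cycles of separatrices connecting equilibria, because there is only one equilibrium.

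\emph{Complete bounded trajectories.} Let $\gamma$ be a complete trajectory defined for all $s\in\mathbb{R}$ and lying in $Q$. By "bounded" I will take it to mean that $\gamma$ stays in a compact subset $K\subset Q$, i.e. bounded away from the axes as well as from infinity. This is the situation in the application, where Lemma \ref{yz bound} bounds $y$ above and below, and $J$ is then bounded above and below as well.

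Then the $\alpha$-limit set $\alpha(\gamma)$ is nonempty, compact, connected and invariant in $K$. By Poincaré--Bendixson, it contains an equilibrium or is a periodic orbit; the latter is excluded. If $\alpha(\gamma)$ contained a non-equilibrium point, its orbit would have to limit onto $(y_*,J_*)$ in both time directions, forming a homoclinic loop, which is also excluded. Hence $\alpha(\gamma)=\{(y_*,J_*)\}$. But a hyperbolic sink has no nontrivial orbit converging to it in backward time, because nearby points leave every small neighborhood backward in time. So $\gamma$ must be the equilibrium itself.

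I expect the main obstacle to be the boundedness convention. Without lower bounds the trajectory could approach the axes, where the field is not $C^1$ and $y_e^{-\alpha'}$ blows up. There I would first try to show directly that the orbit cannot approach the axes. Near $y_e=0$ we have $\dot J_e\to+\infty$, and near $J_e=0$ with $y_e$ bounded we have $\dot J_e>0$. So the region $\{y_e\ge\delta,\ J_e\ge\delta\}$ is positively invariant for small $\delta$, and a backward-time analysis shows that orbits starting near the axes escape to infinity backward. This would confirm that a complete bounded orbit lies in a compact subset of $Q$, after which the argument above applies.
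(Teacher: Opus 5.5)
Your argument is correct. Apart from the equilibrium computation, which matches the paper's, it takes a different route.

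\textbf{The paper's route.} It passes to logarithmic coordinates $A=\ln(y_e/y_*)$, $B=\ln(J_e/J_*)$, and sets $M=B/k-A$, $N=-\alpha'A-B$. In these variables the system reads $\dot A=\beta(e^M-1)$, $\dot B=p(e^N-1)$. It then uses the global Lyapunov function $\mathcal L=l(M)+\lambda l(N)$ with $l(s)=e^s-s-1$. The weight $\lambda=p/(k\alpha'\beta)$ is chosen so that the cross terms cancel, giving
\[
\dot{\mathcal L}=-\beta(e^M-1)^2-\lambda p(e^N-1)^2\le 0 .
\]
A LaSalle-type argument then identifies the equilibrium as the only invariant set on which $\dot{\mathcal L}=0$.

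\textbf{Your route.} You use instead that the divergence is the constant $-\beta-p<0$ (Bendixson). You combine this with Poincar\'e--Bendixson for the $\alpha$-limit set and the fact that a hyperbolic sink is never the $\alpha$-limit of a nonconstant orbit.

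\textbf{Comparison.} Your argument needs no clever construction. Your last step is also more explicit than the paper's one-line conclusion. That conclusion tacitly needs the following: $\mathcal L$ is monotone and bounded along the orbit, hence constant on both limit sets; so both limit sets are the equilibrium, and $\mathcal L\equiv 0$. The price of your route is that it is intrinsically planar and leans on heavier topological theorems. The paper's function, by contrast, is proper in $(A,B)$: it blows up both at infinity and at the axes. Its sublevel sets are therefore compact, positively invariant subsets of the open quadrant, and it gives global forward attraction for every initial point.

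\textbf{Your closing paragraph.} Both proofs use the same convention that a bounded trajectory stays in a compact subset of the open quadrant. This is exactly how the lemma is applied in the proof of Theorem~\ref{the2}, so your closing paragraph is unnecessary. As written it is also inaccurate: $\{y_e\ge\delta,\ J_e\ge\delta\}$ is not positively invariant, because on $J_e=\delta$ one has $\dot J_e<0$ as soon as $y_e>(p\delta)^{-\alpha}$. If you want trapping regions, the sublevel sets of the paper's $\mathcal L$ are the natural choice.
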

\begin{proof}
The functions $y_e$ and $J_e$ are both positive, which is easy to verify by their definition. For the autonomous system \eqref{auto system}, an equilibrium in the positive quadrant must satisfy
\[\beta y_e=\kappa {J_e}^{\frac{1}{k}}, \quad pJ_e={y_e}^{-\alpha'}.\]
Hence we have the unique equilibrium is $(C_v,{C_v}^{-\alpha'}/p)=(y_*,J_*)$.
\par
It remains to prove that every complete bounded trajectory of \eqref{auto system} in the positive quadrant is equal to $(y_*,J_*)$. We set in this section,
\[ A(s):=\ln\frac{y_e(s)}{y_*}, \quad B(s):=\ln\frac{J_e(s)}{J_*},\]
and define
\[ M(s):=\frac{1}{k}B(s)-A(s), \quad N(s):=-\alpha'A(s)-B(s).\]
Then the system \eqref{auto system} becomes
\begin{equation}\label{AB fangcheng}\begin{cases}
 \dot A=\beta(e^M-1),\\
 \dot B=p(e^N-1).
\end{cases}\end{equation}
And consequently,
\begin{equation}\label{MN fangcheng}\begin{cases}
 \dot M=\frac pk(e^N-1)-\beta(e^M-1),\\
 \dot N=-\alpha'\beta(e^M-1)-p(e^N-1).
\end{cases}\end{equation}
For $l(s):=e^s-s-1\geq 0$, we define the Lyapunov function of \eqref{AB fangcheng}
\[\mathcal{L}(A,B):=l(M)+\lambda l(N),\]
where $\lambda:=p/(k\alpha'\beta)$ is a positive constant.
Since $l(s)\to\infty$ as $s\to\infty$ and the linear map
$(A,B)\mapsto(M,N)$ is obviously invertible, $\mathcal L$ is a proper nonnegative function.
Moreover, we have by \eqref{MN fangcheng},
\[\dot{\mathcal{L}}=(e^M-1)\dot M+\lambda(e^N-1)\dot N=-\beta(e^M-1)^2-\lambda p(e^N-1)^2\le0,\]
where the equality holds if and only if $M=N=0$, equivalently $A=B=0$. Thus, the equilibrium $(y_*,J_*)$ is the unique invariant set on which
$\dot{\mathcal L}=0$. And we finish the proof of this lemma.
\end{proof}
\begin{proof}[Proof of Theorem \ref{the2}]
Firstly, if the solution $(y_e(s),J_e(s))$ of the asymptotically autonomous system \eqref{asym auto system} does not converge to $(y_*,J_*)$, there will exist a
$\epsilon_0>0$ and a sequence $\{s_j\}_{j=1}^{\infty}$ satisfying $s_j\to\infty$ such that for any $j$,
\[ |(y_e(s_j),J_e(s_j))-(y_*,J_*)|\geq\epsilon_0.\]
Consider the translated trajectories
\[ y_j(s):=y_e(\tau_j+s), \quad J_j(s):=J_e(\tau_j+s).\]
Because the trajectories remain in a fixed compact subset of the positive quadrant
and the right-hand side of \eqref{asym auto system} is uniformly bounded on compact time intervals,
$\{(y_j,J_j)\}$ is locally equicontinuous. And the uniformly boundedness of $\{(y_j,J_j)\}$ is trivial by definition. Therefore, we use the Arzela--Ascoli theorem again and obtain that after
passing to a subsequence, $(y_j,J_j)$ converges locally uniformly to a complete
bounded trajectory $(y_\infty,J_\infty)$ of the autonomous system \eqref{auto system}.
By Lemma \ref{auto system behavior}, this trajectory must be identically equal to $(y_*,J_*)$, which contradicts the choice of $s_j$. Hence we have
\[y_e(s)\to y_*=C_v,\quad J_e(s)\to J_*,\quad s\to\infty.\]
By the definition of $y_e(s)$ and $y(r)$, we have actually obtained that
\[\lim_{r\to\infty}\frac{v(r)}{r^\beta}=C_v.\]
Then the asymptotic behavior about $v'(r)$ can be similarly proved by using $z$. As a result, we know that
\begin{equation}\label{v zhujiejianjinxing}
\lim\limits_{r\to\infty}\frac{v(r)}{r^{\frac{2k\alpha}{k\alpha+1}}}=\lim\limits_{r\to\infty}\frac{v'(r)}{\frac{2k\alpha}{k\alpha+1}r^{\frac{k\alpha-1}{k\alpha+1}}}=C_v,
\end{equation}
and therefore, $v(r)=C_vr^{\beta}+o(r^{\beta})$ near infinity.
\par
Secondly, we denote a constant \[\bar{g}:=\frac{1}{T_g}\int_{-T_g}^0g(s)\,{\rm d}s,\]
and define a function of $t$ as \[H_3(t):=\int_t^0g(s)\,{\rm d}s+\bar{g}t.\] It is obvious that $H_3$ is bounded on $(-\infty,0)$. Begin with this and \eqref{two4}, we compute that
\[\lim\limits_{t\to-\infty}\frac{w(t)}{(-t)^{\frac{1}{k\alpha+1}}}=\lim\limits_{t\to-\infty}(k\alpha+1)^{\frac{1}{k\alpha+1}}\left(\frac{-\bar{g}t+1+H_3(t)}{-t}\right)^{\frac{1}{k\alpha+1}}=\left(\bar{g}(k\alpha+1)\right)^{\frac{1}{k\alpha+1}},\]
and as $t\to-\infty$,
\begin{align}
\frac{w(t)-\left(\bar{g}(k\alpha+1)(-t)\right)^{\frac{1}{k\alpha+1}}}{(k\alpha+1)^{\frac{1}{k\alpha+1}}}&=(1+\int_t^0g(s)\,{\rm d}s)^{\frac{1}{k\alpha+1}}-\left(-\bar{g}t\right)^{\frac{1}{k\alpha+1}}\notag\\
&=(-\bar{g}t)^{\frac{1}{k\alpha+1}}(1+\frac{1+H_3(t)}{-\bar{g}t})^{\frac{1}{k\alpha+1}}-\left(-\bar{g}t\right)^{\frac{1}{k\alpha+1}}\notag\\
&=O((-t)^{\frac{1}{k\alpha+1}-1}),
\end{align}
where to obtain the last line, we use generalized binomial theorem and omit the higher order terms. We set
$C_w=\left(\bar{g}(k\alpha+1)\right)^{\frac{1}{k\alpha+1}}$ and prove the asymptotic behavior of $w$ given in Theorem \ref{the2} when $t\to-\infty$.
\end{proof}

\section{Refined asymptotic behavior when $\alpha=1$}
In this section, we consider the case that $f$ and $g$ satisfy \eqref{f and g period}, and $\alpha=1$. In fact, if we set $n\geq3$, then for any $1\leq k\leq n$, $1=\alpha>\frac{2k-n}{kn}$. Therefore, Theorem \ref{the1} and Theorem \ref{the2} still hold. We also use $\beta$ to denote $\frac{2k\alpha}{k\alpha+1}=\frac{2k}{k+1}$ in the subsequent discussion of this section.\par
We define
\[s:=r^{\beta},\quad\Phi(s(r)):=v(r),\]
and hence by computation,
\[v'(r)=\Phi'(s)s'(r)=\beta r^{\beta-1}\Phi'(s)=\beta{s}^{\frac{\beta-1}{\beta}}\Phi'(s),\]
and
\[v''(r)=\beta^2s^{\frac{2\beta-2}{\beta}}\Phi''(s)+\beta(\beta-1)s^{\frac{\beta-2}{\beta}}\Phi'(s).\]
By \eqref{two3}, we deduce that $\Phi$ satisfies the ordinary differential equation of $\gamma\in(0,+\infty)$ that
\begin{equation}\label{Phi fangcheng}
\Phi''(s)(\Phi'(s))^{k-1}\Phi(s)+\frac{A}{s}(\Phi'(s))^k\Phi(s)=Bf(s^{\frac{1}{\beta}}),
\end{equation}
where in this section, we set 
\[A:=\frac{n\beta^{-1}-1}{k}=\frac{(n-2)k+n}{2k^2},\quad B:=\frac{n}{kC_n^k\beta^{k+1}}=\frac{n}{kC_n^k}(\frac{k+1}{2k})^{k+1}.\]
When $f=1$, \eqref{Phi fangcheng} is the same as the equation (6.1) of \cite{CJ}.
\par
We next describe the asymptotic behavior of the solution $\Phi(s)=v(r)$ at infinity by isolating the leading terms. We proceed in three steps.
\paragraph{Step 1}The RHS of \eqref{Phi fangcheng} can be decomposed into its mean value $\bar{f}:=\int_0^{T_f}f(\tau)\,{\rm d}\tau/T_f$ and its fluctuating part $\tilde{f}(r):=f(r)-\bar{f}$. We define $\Phi_0(s):=Cs$, where $C$ is a constant to be determined for balancing $B\bar{f}$. Since $\Phi_0=Cs$, we have $\Phi_0'=C$ and $\Phi_0''=0$, and we can compute that
\[\label{Phi0}\Phi_0''(s)(\Phi_0'(s))^{k-1}\Phi_0(s)+\frac{A}{s}(\Phi_0'(s))^k\Phi_0(s)=A\cdot C^{k+1}.\]
Therefore, we set $C=(B\bar{f}/A)^{1/(k+1)}$ and then have that the value of the above equation is equal to $B\bar{f}$. By a direct calculation, we can see that $C=(B\bar{f}/A)^{1/(k+1)}$ is just identical to $C_v$ with $\alpha=1$.
\paragraph{Step 2}We define $\Psi(s):=\Phi(s)-Cs$. Then, $\Psi'=\Phi'-C$ and $\Psi''=\Phi''$. By \eqref{Phi fangcheng}, we have
\begin{equation}\label{Psi fangcheng}
\Psi''(s)(\Psi'(s)+C)^{k-1}(\Psi(s)+Cs)+\frac{A}{s}(\Psi'(s)+C)^k(\Psi(s)+Cs)=Bf(s^{\frac{1}{\beta}}).
\end{equation}
Noticing identities
\[(\Psi'(s)+C)^{k-1}=C^{k-1}+\sum_{i=1}^{k-1}C_{k-1}^iC^{k-1-i}(\Psi'(s))^i,\]
and
\[(\Psi'(s)+C)^{k}=C^{k}+kC^{k-1}\Psi'(s)+\sum_{i=2}^{k}C_{k}^iC^{k-i}(\Psi'(s))^i,\]
we rewrite \eqref{Psi fangcheng} into
\begin{equation}\label{N fangcheng}
C^k\left(s\Psi''(s)+kA\Psi'(s)+A\frac{\Psi(s)}{s}\right)+\mathcal{N}\Psi=B\tilde{f}(s^{\frac{1}{\beta}}),
\end{equation}
where the functional $\mathcal{N}$ is defined as
\begin{align}\mathcal{N}\Psi\coloneqq & C^{k-1}\Psi(s)\Psi''(s)+kAC^{k-1}\frac{\Psi(s)}{s}\Psi'(s)\notag\\
&+\sum_{i=1}^{k-1}C_{k-1}^iC^{k-1-i}(\Psi(s)+Cs)(\Psi'(s))^i\Psi''(s)+\sum_{i=2}^{k}C_{k}^iC^{k-i}(A\frac{\Psi(s)}{s}+AC)(\Psi'(s))^i.\notag
\end{align}
And we define two functionals
\[\mathcal{Y}(y,z)\coloneqq(Cs+y)(C+z)^{k-1}-C^ks,\]
\[\mathcal{Z}(y,z)\coloneqq\frac{A}{s}\left((Cs+y)(C+z)^k-C^ky-kC^ksz-C^{k+1}s\right),\]
where $y$ and $z$ are functions of $s$. It is easy to verify that
\begin{equation}\mathcal{N}\Psi=\mathcal{Y}(\Psi,\Psi')\Psi''+\mathcal{Z}(\Psi,\Psi').\label{NP biaodashi}\end{equation}
\par
We claim that there exists a $T_f$-periodic function $F_1$ of $r$ satisfying
\[F_1''(r)=\frac{4k^2B}{(k+1)^2C^k}\tilde{f}(r),\]
and
\[\int_0^{T_f} F_1'(\tau)\,{\rm d}\tau=0,\quad\int_0^{T_f} F_1(\tau)\,{\rm d}\tau=0.\]
In fact, the proof of this claim is trivial: one only needs to observe that $\tilde{f}$ has zero mean over a period, and then a direct integration yields the desired conclusion. It is similarly easy to prove that there also exists a $T_f$-periodic function $F_2$ of $r$ satisfying
\[F_2''(r)=-(n-3)F_1'(r),\quad\text{and}\quad\int_0^{T_f}F_2(\tau)\,{\rm d}\tau=0.\]
\par
We difine the following two functions:
\[P_1(s)\coloneqq s^{-\frac{1}{k}}F_1(s^{\frac{1}{\beta}}),\quad P_2(s)\coloneqq s^{-\frac{k+3}{2k}}F_2(s^{\frac{1}{\beta}}),\]
and define a linear operator by
\[\mathcal{L}P\coloneqq sP''(s)+kAP'(s)+A\frac{P(s)}{s}.\]
Then by direct computation, we have
\begin{align}\mathcal{L}P_1&=\frac{(k+1)^2}{4k^2}F_1''(s^{\frac{1}{\beta}})+\frac{(n-3)(k+1)^2}{4k^2}\frac{F_1'(s^{\frac{1}{\beta}})}{s^{\frac{1}{\beta}}}+\frac{k+1}{k^2}\frac{F_1(s^{\frac{1}{\beta}})}{s^{\frac{2}{\beta}}}\notag\\
&=\frac{B}{C^k}\tilde{f}(s^{\frac{1}{\beta}})+\frac{(n-3)(k+1)^2}{4k^2}\frac{F_1'(s^{\frac{1}{\beta}})}{s^{\frac{1}{\beta}}}+\frac{k+1}{k^2}\frac{F_1(s^{\frac{1}{\beta}})}{s^{\frac{2}{\beta}}},\label{LP1}
\end{align}
and
\begin{align}
\mathcal{L}P_2&=\frac{(k+1)^2}{4k^2}\frac{F_2''(s^{\frac{1}{\beta}})}{s^{\frac{1}{\beta}}}+\frac{(n-5)(k+1)^2}{4k^2}\frac{F_2'(s^{\frac{1}{\beta}})}{s^{\frac{2}{\beta}}}-\frac{(k+1)(kn-5k+n-9)}{4k^2}\frac{F_2(s^{\frac{1}{\beta}})}{s^{\frac{3}{\beta}}}\notag\\
&=-\frac{(n-3)(k+1)^2}{4k^2}\frac{F_1'(s^{\frac{1}{\beta}})}{s^{\frac{1}{\beta}}}+\frac{(n-5)(k+1)^2}{4k^2}\frac{F_2'(s^{\frac{1}{\beta}})}{s^{\frac{2}{\beta}}}-\frac{(k+1)(kn-5k+n-9)}{4k^2}\frac{F_2(s^{\frac{1}{\beta}})}{s^{\frac{3}{\beta}}}.\label{LP2}
\end{align}
\paragraph{Step 3}We define $Q(s):=\Psi(s)-P_1(s)-P_2(s)$, so that $\Phi(s)=Cs+P_1(s)+P_2(s)+Q(s)$. We will consider $Cs+F_1(s)+F_2(s)$ as the leading term and $Q(s)$ as the remainder term. By \eqref{N fangcheng}, we have
\[C^k(\mathcal{L}P_1+\mathcal{L}P_2+\mathcal{L}Q)+\mathcal{N}(P_1+P_2+Q)=B\tilde{f}(s^{\frac{1}{\beta}}).\]
Substituting \eqref{LP1} and \eqref{LP2} into the above equation, we obtain
\begin{equation}C^k\mathcal{L}Q+\mathcal{N}(P_1+P_2+Q)=-C^kQ_0(s),\label{LQ equ1}\end{equation}
where we define
\[Q_0(s)\coloneqq\frac{k+1}{k^2}\frac{F_1(s^{\frac{1}{\beta}})}{s^{\frac{2}{\beta}}}+\frac{(n-5)(k+1)^2}{4k^2}\frac{F_2'(s^{\frac{1}{\beta}})}{s^{\frac{2}{\beta}}}-\frac{(k+1)(kn-5k+n-9)}{4k^2}\frac{F_2(s^{\frac{1}{\beta}})}{s^{\frac{3}{\beta}}}.\]
And by \eqref{NP biaodashi}, we have
\[\mathcal{N}(P+Q)=\mathcal{Y}(P+Q,P'+Q')(P''+Q'')+\mathcal{Z}(P+Q,P'+Q'),\]
where we set $P= P_1+P_2$ henceforth, and then
\begin{align}\mathcal{N}(P+Q)=&\mathcal{N}P+\mathcal{Y}(P+Q,P'+Q')Q''+\left(\mathcal{Y}(P+Q,P'+Q')-\mathcal{Y}(P,P')\right)P''\notag\\&+\mathcal{Z}(P+Q,P'+Q')-\mathcal{Z}(P,P').\notag
\end{align}
\par
Noticing $\mathcal{Y}$ and $\mathcal{Z}$ are both $C^1$, we have by the fundamental theorem of calculus that
\begin{equation}\mathcal{N}(P+Q)=\mathcal{N}P+\Theta_0(s)Q+\Theta_1(s)Q'+\Theta_2(s)Q'',\label{N(P+Q)}\end{equation}
where we denote
\[\Theta_0(s)\coloneqq\int_0^1\left(\mathcal{Y}_y(P+\theta Q,P'+\theta Q')P''+\mathcal{Z}_y(P+\theta Q,P'+\theta Q')\right)\,{\rm d}\theta,\]
\[\Theta_1(s)\coloneqq\int_0^1\left(\mathcal{Y}_z(P+\theta Q,P'+\theta Q')P''+\mathcal{Z}_z(P+\theta Q,P'+\theta Q')\right)\,{\rm d}\theta,\]
and
\[\Theta_2(s)\coloneqq \mathcal{Y}(P+Q,P'+Q').\]
Combining \eqref{LQ equ1} and \eqref{N(P+Q)}, we have
\[C^k\mathcal{L}Q+\mathcal{N}P+\Theta_0(s)Q+\Theta_1(s)Q'+\Theta_2(s)Q''=-C^kQ_0(s).\]
By the definition of $\mathcal{L}$, we rewrite the above equation into
\[\left(C^ks+\Theta_2(s)\right)Q''(s)+\left(kAC^k+\Theta_1(s)\right)Q'(s)+\left(\frac{AC^k}{s}+\Theta_0(s)\right)Q(s)=-\mathcal{N}P(s)-C^kQ_0(s).\]
For normalization, we divide both sides of the above equation by $(C^ks+\Theta_2(s))/s^2$, which yields
\[s^2Q''(s)+\frac{kAC^ks+s\Theta_1(s)}{C^ks+\Theta_2(s)}sQ'(s)+\frac{AC^ks+s^2\Theta_0(s)}{C^ks+\Theta_2(s)}Q(s)=-s^2\frac{\mathcal{N}P(s)+C^kQ_0(s)}{C^ks+\Theta_2(s)}.\]
We denote
\[R_1(s)\coloneqq \frac{kAC^ks+s\Theta_1(s)}{C^ks+\Theta_2(s)}-kA,\quad R_2(s)\coloneqq\frac{AC^ks+s^2\Theta_0(s)}{C^ks+\Theta_2(s)}-A,\]
and
\[R(s)\coloneqq-s^2\frac{\mathcal{N}P+C^kQ_0(s)}{C^ks+\Theta_2(s)},\]
then we finally obtain the ODE of $Q$ as
\begin{equation}\label{Q ODE}s^2Q''(s)+(kA+R_1(s))sQ'(s)+(A+R_2(s))Q(s)=R(s).\end{equation}
\begin{lemma}\label{R1R2 o(1)}
As $s\to\infty$, we have
\[R_1(s)=o(1),\quad R_2(s)=o(1),\quad\text{and}\quad R(s)=O(s^{-\frac{1}{k}}).\]
\end{lemma}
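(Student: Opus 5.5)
The plan is to first collect size information on $P$, $Q$ and their derivatives, and then bound $\Theta_0,\Theta_1,\Theta_2$, $\mathcal{N}P$ and $Q_0$ one at a time.

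\emph{Size of $P$.} Since $F_1,F_2$ are bounded periodic functions with bounded derivatives up to order two, and $\frac{d}{ds}F(s^{1/\beta})=\beta^{-1}s^{1/\beta-1}F'(s^{1/\beta})$, direct differentiation gives
\[
P_1=O(s^{-1/k}),\quad P_1'=O(s^{-1/k+1/\beta-1}),\quad P_1''=O(s^{-1/k+2/\beta-2}).
\]
Here $1/\beta=(k+1)/(2k)$, so $-1/k+1/\beta-1=-\frac{1}{2}$ and $-1/k+2/\beta-2=-1$. Thus $P_1'=O(s^{-1/2})$ and $P_1''=O(s^{-1})$. The term $P_2$ carries an extra factor $s^{-1/\beta}$ and is of smaller order.

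\emph{Size of $Q$.} From Theorem \ref{the2}, equation \eqref{v zhujiejianjinxing}, we have $\Phi(s)=Cs+o(s)$ and $\Phi'(s)=v'(r)/(\beta r^{\beta-1})\to C$. Hence $\Psi=o(s)$ and $\Psi'=o(1)$, and therefore $Q=o(s)$ and $Q'=o(1)$.

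\emph{Bounds on $\Theta_2$ and $\Theta_1$.} Write $\mathcal{Y}(y,z)=C^{k-1}y+(Cs+y)\bigl((C+z)^{k-1}-C^{k-1}\bigr)$. With $y=o(s)$ and $z=o(1)$ this gives $\Theta_2=o(s)$, so $C^ks+\Theta_2\sim C^ks$. For $\Theta_1$, compute
\[
\mathcal{Z}_z=\frac{A}{s}\bigl(k(Cs+y)(C+z)^{k-1}-kC^ks\bigr)=o(1),
\]
and $\mathcal{Y}_zP''=O(s)\cdot O(s^{-1})$. This second term is only $O(1)$, and I need it to be $o(1)$. Writing $\mathcal{Y}_z=(k-1)(Cs+y)(C+z)^{k-2}$ makes the difficulty explicit: it is genuinely of size $s$. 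This is the delicate point. I would first check whether $P_1''$ is actually $O(s^{-1})$ or better. The leading part of $P_1''$ comes from $F_1''(s^{1/\beta})s^{2/\beta-2-1/k}=F_1''\,s^{-1}$, so it is exactly of order $s^{-1}$. Then $s\Theta_1/(C^ks)$ contributes $\Theta_1/C^k$, which would be $O(1)$, periodic and mean zero.

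My first attempt would be to exploit the exact identity: $\mathcal{Y}_zP''$ pairs with $\mathcal{Z}_z$ in a way that reorganizes the full nonlinearity. If that fails, I would argue that this contribution is $o(1)$ in the averaged sense required later. Alternatively, I would check whether the intended estimate only uses $\mathcal{Y}_z P''=O(s\cdot s^{-1})$ divided by $C^ks$, giving $O(s^{-1})$. Rereading the definition, $R_1=\frac{kAC^ks+s\Theta_1}{C^ks+\Theta_2}-kA=\frac{s\Theta_1-kA\Theta_2}{C^ks+\Theta_2}$, so only $\Theta_1=o(1)$ is needed. With $\mathcal{Y}_zP''=O(1)$ this last route still fails. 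I therefore flag the $\mathcal{Y}_zP''$ term as the main obstacle.

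\emph{Bound on $\Theta_0$.} We have $\mathcal{Y}_y=(C+z)^{k-1}=O(1)$, so $\mathcal{Y}_yP''=O(s^{-1})$. Also
\[
\mathcal{Z}_y=\frac{A}{s}\bigl((C+z)^k-C^k\bigr)=o(s^{-1}).
\]
Hence $s^2\Theta_0=O(s)$, and $R_2=\frac{s^2\Theta_0-A\Theta_2}{C^ks+\Theta_2}=O(1)$. To upgrade this to $o(1)$ I again have to handle $\mathcal{Y}_yP''\sim C^{k-1}P_1''$, which is exactly of order $s^{-1}$. For this I would use the $\theta$-integral structure together with a refinement of $Q'=o(1)$.

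\emph{Bound on $R$.} Here $Q_0=O(s^{-2/\beta-1/k})=O(s^{-1-1/k})$, and I need $\mathcal{N}P=O(s^{-1-1/k})$ as well. The terms are:
\begin{itemize}
\item $\mathcal{Y}(P,P')P''$, where $\mathcal{Y}(P,P')=O(|P|+s|P'|)=O(s^{1/2})$, so this term is $O(s^{-1/2})$;
\item $\mathcal{Z}(P,P')=O(s^{-1}(|P||P'|+s|P'|^2))=O(s^{-1})$.
\end{itemize}
Multiplying by $s^2/(C^ks)$ gives only $R=O(s^{1/2})$, which falls short of the claimed $O(s^{-1/k})$. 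So the key step is a cancellation in the quadratic terms $P_1'P_1''$ and $(P_1')^2$. I would first expand these to leading order, $P_1'\approx\beta^{-1}s^{-1/2}F_1'$ and $P_1''\approx\beta^{-2}s^{-1}F_1''$, and then check the exponents against the claim.

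In summary, the approach is straightforward order counting using \eqref{v zhujiejianjinxing} and the explicit scaling of $P_1$ and $P_2$. The hard step is controlling the terms quadratic in $P_1'$ and $P_1''$, and the $\mathcal{Y}_zP''$ term in $\Theta_1$: naive counting does not yield the stated orders there, and some cancellation or an additional correction term would be needed.
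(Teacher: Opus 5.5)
Your proposal follows the paper's order-counting strategy, but it does not prove the lemma. It stops at exactly the three estimates the lemma needs, namely $\Theta_1=o(1)$, $s\Theta_0=o(1)$ and $s\,\mathcal{N}P=O(s^{-1/k})$, and leaves them to a hoped-for cancellation or an ``averaged'' version. That gap cannot be closed. The obstacles you flagged are genuine, and the statement fails whenever $\tilde f\not\equiv 0$, i.e.\ for every non-constant periodic $f$.

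Since $-\frac1k+\frac2\beta-1=0$ and $F_1''=\beta^2\frac{B}{C^k}\tilde f$, one has exactly
\[
sP''(s)=\beta^{-2}F_1''(s^{1/\beta})+O(s^{-1/\beta})=\frac{B}{C^k}\,\tilde f(s^{1/\beta})+O(s^{-1/\beta}).
\]
This is forced by the construction: $sP_1''$ is precisely the term of $\mathcal{L}P_1$ that balances the forcing $B\tilde f/C^k$. Along $(P+\theta Q,P'+\theta Q')$ we have $\mathcal{Y}_y\to C^{k-1}$, $\mathcal{Y}_z=(k-1)C^{k-1}s(1+o(1))$, $s\mathcal{Z}_y\to0$, $\mathcal{Z}_z\to0$, and also $\Theta_2=o(s)$. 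Together these give
\[
R_2(s)=\frac{B}{C^{k+1}}\,\tilde f(s^{1/\beta})+o(1),\qquad R_1(s)=(k-1)\frac{B}{C^{k+1}}\,\tilde f(s^{1/\beta})+o(1).
\]
Thus $A+R_2\approx Af(s^{1/\beta})/\bar f$, which is just the zeroth-order coefficient of the linearization at the true solution.

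For $k\ge2$ one also has $sP'=\beta^{-1}s^{\frac{k-1}{2k}}F_1'(s^{1/\beta})+O(s^{-1/k})$. Hence $\mathcal{Y}(P,P')P''$ dominates $\mathcal{N}P$, and
\[
R(s)=-(k-1)\beta^{-3}C^{-1}s^{\frac{k-1}{2k}}\,(F_1'F_1'')(s^{1/\beta})+o\bigl(s^{\frac{k-1}{2k}}\bigr).
\]
This is unbounded, since $F_1'F_1''=\frac12\bigl((F_1')^2\bigr)'\not\equiv0$. All of these leading terms are built from $P$ alone. So neither the $\theta$-integral structure nor any refinement of $Q'=o(1)$ can remove them.

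The paper's proof reaches the lemma only by asserting $P_1'=O(s^{-1-1/k})$ and $sP_1''=O(s^{-1-1/k})$, from which it gets $s\Theta_0\to0$, $\Theta_1\to0$ and $s\,\mathcal{N}P=O(s^{-1-2/k})$. Those bounds keep the derivatives that fall on $s^{-1/k}$ and drop the ones that fall on $F_1(s^{1/\beta})$. On this point your counting is right and the paper's is not.

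You do have one slip of your own: $-\frac1k+\frac1\beta-1=-\frac{k+1}{2k}=-\frac1\beta$, not $-\frac12$. So $P_1'=O(s^{-1/\beta})$, and the crude bound is $R=O(s^{\frac{k-1}{2k}})$ rather than $O(s^{1/2})$. In particular, for $k=1$ the claims $R_1=o(1)$ and $R=O(s^{-1})$ do hold, and only $R_2=o(1)$ fails.

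What is true in general is $R_1,R_2=O(1)$, with the explicit mean-zero oscillating leading parts above, and $R=O(s^{\frac{k-1}{2k}})$. The lemma as stated cannot be proved. To continue toward Corollary~\ref{the3}, one must either treat the $\tilde Q$-equation with $O(1)$ coefficients that oscillate with zero mean at a frequency growing like $e^{\tilde s/\beta}$, or add further correctors. The first option is where your averaging idea belongs.
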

\begin{proof}
By \eqref{v zhujiejianjinxing}, we have that as $s\to\infty$,
\[\frac{\Psi(s)}{s}=\frac{v(r)}{r^{\beta}}-C\to 0,\quad \Psi'(s)=\beta r^{\beta-1}v'(r)-C\to 0,\]
and therefore
\[\frac{Q(s)}{s}=\frac{\Psi(s)-P(s)}{s}=\frac{\Psi(s)}{s}-s^{-\frac{k+1}{k}}F_1(s^{\frac{1}{\beta}})-s^{-\frac{3(k+1)}{2k}}F_2(s^{\frac{1}{\beta}})\to0,\]
where we use the boundedness of $F_1$ and $F_2$. And it is similarly easy to verify that $Q'(s)\to 0$ as $s\to\infty$. In fact, by the definition of $P_1$ and $P_2$, we further have $P_1'(s)=O(s^{-1-1/k})$, $sP_1''(s)=O(s^{-1-1/k})$, $P_2'(s)=O(s^{-3(k+1)/(2k)})$ and $sP_2''(s)=O(s^{-3(k+1)/(2k)})$ as $s\to\infty$, which are all infinitesimal.
\par
Firstly, it is straightforward to see
\[\frac{\Theta_2(s)}{C^ks}=\frac{(Cs+P+Q)(C+P'+Q')^{k-1}-C^ks}{C^ks}\to 0,\quad s\to\infty.\]
Secondly, by the definition of $\mathcal{Y}$ and $\mathcal{Z}$, we have
\[\mathcal{Y}_y(y,z)=(C+z)^{k-1},\quad \mathcal{Z}_y(y,z)=\frac{A}{s}((C+z)^k-C^k),\]
and therefore,
\[s\Theta_0(s)=sP''(s)\int_0^1 (C+P'+\theta Q')\,{\rm d}\theta+A\int_0^1((C+P'+\theta Q')^k-C^k)\,{\rm d}\theta\to 0,\quad\text{as }s\to\infty.\]
Thirdly, by computing $\mathcal{Y}_z$ and $\mathcal{Z}_z$, we can similarly obtain that $\Theta_1(s)\to0$ as $s\to\infty$. Finally, we have
\[R_1(s)=\frac{kA+\frac{\Theta_1(s)}{C^k}}{1+\frac{\Theta_2(s)}{C^ks}}-kA=o(1),\quad\text{as }s\to\infty,\]
and
\[R_2(s)=\frac{A+\frac{s\Theta_0(s)}{C^k}}{1+\frac{\Theta_2(s)}{C^ks}}-A=o(1),\quad\text{as }s\to\infty.\]
\par
As for the estimate of $R(s)$, we omit the non-essential details and focus only on estimating the leading term that affects the asymptotic behavior as $s\to\infty$. By the definition of $Q_0(s)$ and $\mathcal{N}P$, we can see that $sQ_0(s)=O(s^{-1/k})$ and $s\mathcal{N}P=O(s^{-1-2/k})$ as $s\to\infty$. Therefore, we have as $s\to\infty$,
\[R(s)=-\frac{s\mathcal{N}P+C^ksQ_0(s)}{1+\frac{\Theta_2(s)}{C^ks}}=O(s^{-\frac{1}{k}}).\]
\end{proof}
We denote $\tilde{s}\coloneqq \ln s$ and define $\tilde{Q}(\tilde{s})\coloneqq Q(e^{\tilde{s}})$, $\tilde{R}(\tilde{s})\coloneqq R(e^{\tilde{s}})$, $\tilde{R}_1(\tilde{s})\coloneqq R_1(e^{\tilde{s}})$ and $\tilde{R}_2(\tilde{s})\coloneqq R_2(e^{\tilde{s}})$, then
\[\tilde{Q}'=sQ',\quad\tilde{Q}''=sQ'+s^2Q''.\]
By \eqref{Q ODE}, we have that $\tilde{Q}$ satisfies the following ODE:
\begin{equation}\tilde{Q}''(\tilde{s})+(kA-1+\tilde{R}_1(\tilde{s}))\tilde{Q}'(\tilde{s})+(A+\tilde{R}_2(\tilde{s}))\tilde{Q}(\tilde{s})=\tilde{R}(\tilde{s}).\label{tildeQ fangcheng}\end{equation}
\par
To determine the exact asymptotic order of $\tilde{Q}$ at infinity, we start with the constant-coefficient equation in the limiting case:
\[U''(\tau)+(kA-1)U'(\tau)+AU(\tau)=0.\]
The limiting
characteristic polynomial of \eqref{tildeQ fangcheng} is
\[p(\lambda)=\lambda^2+\frac{(n-4)k+n}{2k}\lambda+\frac{(n-2)k+n}{2k^2},\]
and the two roots are
\[\lambda_{\pm}=-\frac{(n-4)k + n}{4k} \pm \frac{1}{4k} \sqrt{((n-4)k + n)^2 - 8((n-2)k + n)},\]
which agrees with that given in Section 6.1 of \cite{CJ}. It is worth noting that the real parts of $\lambda_{\pm}$ are both negative, and we denote the larger as $\lambda_*$.
\par
Following the steps given in Secition 4 of \cite{ABL}, we denote $W=\begin{pmatrix} U & U' \end{pmatrix}^{\top}$, then
\[W'=(A_{\infty}+E(\tau))W+H(\tau),\]
where we denote
\[A_{\infty}\coloneqq\begin{pmatrix}0 & 1\\ -A & -(kA-1) \end{pmatrix},\quad E(\tau)\coloneqq\begin{pmatrix}0 & 0\\ -\tilde{R}_1(\tau) & -\tilde{R}_2(\tau) \end{pmatrix},\quad H(\tau)\coloneqq\begin{pmatrix}0\\ -\tilde{R}(\tau) \end{pmatrix}.\]
By Lemma \ref{R1R2 o(1)}, we know that for any $\epsilon>0$, there exists a sufficiently large $\tau_0$ such that for any $\tau\geq\tau_0$, $||E(\tau)||\leq\epsilon$. Then, by the asymptotic stability result, we have 
\[||W(\tau)||\leq C_{\epsilon}\left(e^{(\lambda_*+\epsilon)\tau}+\int_{\tau_0}^{\tau}e^{(\lambda_*+\epsilon)(t-\theta)}e^{-\frac{\theta}{k}}\,{\rm d}\theta\right),\]
where $C_{\epsilon}$ is a constant depending on $\epsilon$. After a case analysis, we can conclude that for $\mu\coloneqq\max\{\lambda_*,-1/k\}$,
\[U(\tau)=O(e^{(\mu+\epsilon)\tau}),\quad\forall \epsilon>0.\]
Thus by selecting $\epsilon<-\mu$, we have $U(\tau)\to0$ as $\tau\to\infty$.
\par
Applying the estimate just derived, we turn back to investigate the solution of \eqref{Q ODE}. As $s\to\infty$, we can estimate $Q(s)=O(s^{\mu+\epsilon})$, and then by the definition of $R_i$ for $i=1, 2$,
\[R_i(s)=O(s^{-\frac{k+1}{k}})+O(s^{\mu+\epsilon-1}),\]
thus $R_i$ belongs to $L^1(0,\infty)$. Again following \cite{ABL}, we find
\[Q(s)=O(s^{\mu}),\quad\text{as }s\to\infty,\]
which yields directly that
\[\Phi(s)=Cs+P_1(s)+P_2(s)+O(s^{\mu}),\] 
and then
\[v(r)=Cr^{\frac{2k}{k+1}}+r^{-\frac{2}{k+1}}F_1(r)+r^{-\frac{k+3}{k+1}}F_2(r)+O(r^{\frac{2k\mu}{k+1}}).\]
To emphasize the decay order, we denote
\[M_{n,k}\coloneqq \min\{-\frac{2k\lambda_*}{k+1}, \frac{2}{k+1}\},\]
and then we finish the proof of Corollary \ref{the3}.

\newpage

\newpage
\bibliographystyle{amsplain}

\end{document}